%\magnification=1050
\documentclass[12pt] {amsart}
\usepackage{amsmath,amssymb,amsthm,amsfonts,amscd,amsopn}
\usepackage{eucal, mathrsfs}
\usepackage{enumitem,comment}
\usepackage{hyperref}
\usepackage[all]{xy}\xyoption{dvips}
%\usepackage{comment}\usepackage{xspace}\usepackage[usenames]{color}
%%%%%%%%%%%%%%%%%%%%%
\theoremstyle{plain}
\newtheorem{thm}{Theorem}[section]

\newtheorem{lemma}[thm]{Lemma}
\newtheorem{lem}[thm]{Lemma}

\newtheorem{cor}[thm]{Corollary}

\newtheorem{prop}[thm]{Proposition}

%%%%%%%%%%%%%%%%%%%%%
\theoremstyle{definition}
\newtheorem{rem}[thm]{Remark}

\newtheorem{rmk}[thm]{Remark}

\newtheorem{defn}[thm]{Definition}
\newtheorem{ex}[thm]{Example}
\newtheorem{claim}[thm]{Claim}

\newtheorem{notation}[thm]{Notation}
%%%%%%%%%%%%%%%%%%%%%
\theoremstyle{remark}
\newtheorem*{claim*}{Claim}
\newtheorem{subclaim}[equation]{Claim}
%%%%%%%%%%%%%%%%%%%%%
\numberwithin{equation}{thm}

%%%%%%%%%%%%%%%%%%%%%
\renewcommand{\mathcal}{\mathscr}

%%%%%%%%%%%%%%%%%%%%%
\newcommand{\A}{{\mathbb A}}

\newcommand{\F}{{\mathcal F}}

\newcommand{\N}{{\mathbb N}}

\renewcommand{\P}{{\mathbb P}}

%%%%%%%%%%%%%%%%
%script letters  small s then capital letter

\newcommand{\sI}{\mathscr{I}}

\newcommand{\sL}{\mathscr{L}}

\newcommand{\sO}{\mathscr{O}}

%%%%%%%%%%%%%%%%
 
\DeclareMathOperator{\Proj}{Proj}
\DeclareMathOperator{\Spec}{Spec}

\DeclareMathOperator{\charact}{char}
\DeclareMathOperator{\id}{id}
%%%%%%%%%%%%%%%%
\title[On the failure of generic vanishing]{Generic vanishing fails for singular
  varieties and in characteristic $p>0$}
%%%%%%%%%%%%%%%%%%%%%
\author{Christopher D. Hacon}
\address{Department of Mathematics, University of Utah, 155 South 1400 East,
Salt Lake City, UT 48112-0090, USA}
\email{hacon@math.utah.edu}
%%%%%%%%%%%%%%%%%%%%%
\author{S\'andor J Kov\'acs}
\address{University of Washington, Department of Mathematics, Box 354350,
Seattle, WA 98195-4350, USA}
\email{skovacs@uw.edu}
%%%%%%%%%%%%%%%%%%%%%
%%%%%%%%%%%
\marginparwidth=100pt

%%%%%%%%%
\usepackage[usenames]{color}
\definecolor{tomato}{RGB}{180,62,39}
        
\definecolor{forrest}{RGB}{81,133,49}    
        
\definecolor{lighttomato}{RGB}{253,65,65}
        
\definecolor{lightforrest}{RGB}{145,237,87}    
        
\definecolor{mygreen}{RGB}{40,104,69}    
        
\definecolor{mygreen2}{RGB}{3,149,39}    
        
\definecolor{darkolivegreen}{RGB}{102,118,75}
        
\definecolor{cranegreen}{RGB}{102,118,75}
        
\definecolor{myverydarkblue}{RGB}{0,0,20}
        
\definecolor{mydarkblue}{RGB}{10,92,153}
        
\definecolor{myblue}{RGB}{57,222,186}
        
\definecolor{pinkish}{RGB}{213,83,222}
        
\definecolor{colD}{RGB}{213,83,222}
        
\definecolor{defb}{RGB}{213,83,222}
        
\definecolor{goldenrod}{RGB}{225,115,69}
        
\definecolor{mauve}{RGB}{224, 176, 255}
        
\definecolor{fuchsia}{RGB}{255, 0, 255}
        
\definecolor{lavender}{RGB}{230, 230, 250}
        
\definecolor{grey}{RGB}{180,180,180}
        
\definecolor{lightgrey}{RGB}{220,220,220}

\makeatletter
\setitemize[1]{leftmargin=*,parsep=0em,itemsep=0.125em,topsep=0.125em}
\makeatother
\newcommand\hkL{}

\dedicatory{Dedicated to Rob Lazarsfeld on the occasion of his sixtieth birthday.}
%%%%%%%%%%%%%%%%%%%%%
\begin{document}
\maketitle
 
\section{Introduction}

In recent years there has been considerable interest in understanding the geometry of
irregular varieties, i.e., varieties admitting a nontrivial morphism to an abelian
veriety.  One of the central results in the area is the following result conjectured
by M. Green and R. Lazarsfeld (cf. \cite[6.2]{GL91}) and proven in \cite{Hacon04} and
\cite{PP09}.

\begin{thm}\label{GV}
  Let $\lambda :X\to A$ be a generically finite (onto its image) morphism from a compact
  K\"ahler manifold to a complex torus. If $\mathcal L \to X\times {\rm Pic }^0(A)$
  is the universal family of topologically trivial line bundles, then
  $$
  R^i{\pi _{ {\rm Pic }^0(A) * }}\mathcal L=0\qquad {\rm for}\ i<n.
  $$  
\end{thm}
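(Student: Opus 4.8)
The plan is to recast Theorem~\ref{GV} as a generic vanishing statement for a Fourier--Mukai transform, and then to supply the required positivity from Hodge theory. First I would reduce the vanishing to a statement purely on $\widehat A:=\mathrm{Pic}^0(A)$. Factoring the projection $X\times\widehat A\to\widehat A$ through $\lambda\times\mathrm{id}$, one has $\mathcal L=(\lambda\times\mathrm{id})^*\mathcal P$ for the Poincar\'e bundle $\mathcal P$ on $A\times\widehat A$, and the projection formula together with the identity $R(\lambda\times\mathrm{id})_*\mathcal O_{X\times\widehat A}=(R\lambda_*\mathcal O_X)\boxtimes\mathcal O_{\widehat A}$ gives
\[
R\pi_{\mathrm{Pic}^0(A)*}\mathcal L\ \cong\ R\widehat S(R\lambda_*\mathcal O_X),
\]
where $R\widehat S$ is the Fourier--Mukai functor with kernel $\mathcal P$. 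Thus the theorem is equivalent to the assertion that the complex $R\widehat S(R\lambda_*\mathcal O_X)$ carries no cohomology in degrees $<n$.

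\emph{Passage to the dual side.} Positivity is visible on $\omega_X$, not on $\mathcal O_X$, so I would dualize. Grothendieck duality for the proper morphism $\lambda$ (using $\omega_A\cong\mathcal O_A$) identifies $R\lambda_*\omega_X$ with the Serre dual of $R\lambda_*\mathcal O_X$, and Mukai's exchange of $R\widehat S$ with duality then converts the sought concentration of $R\widehat S(R\lambda_*\mathcal O_X)$ into the statement that $R\lambda_*\omega_X$ is a \emph{generic vanishing} object on $A$, i.e.\ each cohomological support locus $V^i(R^j\lambda_*\omega_X)=\{\alpha:H^i(A,R^j\lambda_*\omega_X\otimes P_\alpha)\neq0\}$ has codimension $\geq i$. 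Using Koll\'ar's splitting $R\lambda_*\omega_X\cong\bigoplus_j R^j\lambda_*\omega_X[-j]$ and the torsion-freeness of the summands, it then suffices to prove that each coherent sheaf $R^j\lambda_*\omega_X$ on $A$ is a generic vanishing sheaf.

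\emph{The vanishing input.} For this I would invoke the cohomological criterion of Hacon~\cite{Hacon04} and of Pareschi--Popa~\cite{PP09}: a sheaf $\mathcal F$ on $A$ is generic vanishing once, for a sufficiently positive ample line bundle on $\widehat A$ and the isogeny $\psi\colon B\to A$ it induces, one has $H^i(B,\psi^*\mathcal F\otimes M)=0$ for all $i>0$ and all ample $M$ on $B$. Pulling the whole diagram back along the \'etale isogeny to $\lambda_B\colon X_B=X\times_A B\to B$ and applying flat base change, the case $\mathcal F=R^j\lambda_*\omega_X$ becomes precisely Koll\'ar's vanishing $H^i(B,R^j(\lambda_B)_*\omega_{X_B}\otimes M)=0$ for $i>0$.

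The heart of the matter lies entirely in this last step. The hypotheses are \emph{K\"ahler} rather than projective, so Koll\'ar's splitting, torsion-freeness, and vanishing are not available verbatim and must be replaced by their Hodge-theoretic analogues (Takegoshi's $L^2$ extension of Koll\'ar's theorems, or the formalism of polarizable Hodge modules), while one must also check that the Fourier--Mukai and isogeny arguments survive on complex tori. A secondary but genuinely delicate point is the homological bookkeeping: matching the shifts in Mukai's exchange formula against the codimension bounds on the $V^i$, where the generically-finite hypothesis (forcing $\dim\lambda(X)=n$) is exactly what pins the transform into the single range $i\geq n$.
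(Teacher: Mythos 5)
This statement is quoted in the paper as a known theorem (conjectured in \cite{GL91} and proven in \cite{Hacon04} and \cite{PP09}); the paper itself offers no proof of it, so there is no in-paper argument to compare yours against line by line. That said, your outline is a faithful reconstruction of the strategy actually used in those references, and it is consistent with the formalism the paper does set up: your identification $\mathbf R\pi_{\widehat A*}\mathcal L\simeq \mathbf R\widehat S(\mathbf R\lambda_*\mathcal O_X)$ is exactly \eqref{eq-1} with $F=\mathcal O_X$, and your reduction---dualize to $\mathbf R\lambda_*\omega_X$, test generic vanishing against $\widehat{L^{\vee}}$ for $L$ sufficiently ample on $\widehat A$, pull back along the isogeny $\phi_L$ to turn $\widehat{L^{\vee}}$ into copies of an ample line bundle, and conclude by Koll\'ar-type vanishing---is precisely the chain of equivalences that the paper runs in the \emph{opposite} direction in \eqref{l-c} and \eqref{p-2} to produce its counterexamples. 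Be aware, though, that what you have written is a strategy rather than a proof: the two points you yourself flag are exactly where all the content lies. In the compact K\"ahler setting you cannot cite Koll\'ar's splitting, torsion-freeness, or vanishing verbatim; you need Takegoshi's (or Saito's) analytic versions, and the criterion ``$\mathcal F$ is GV iff $H^i$ vanishes against $\widehat{L^{\vee}}\otimes P_\alpha$ for $L\gg 0$'' is itself the main theorem of \cite{PP11}/\cite{Hacon04}, not a formality. Finally, the shift bookkeeping does close up (each $\mathbf R\widehat S(D_A(R^j\lambda_*\omega_X))$ sits in degree $g$, so $\mathbf R\widehat S(\mathbf R\lambda_*\mathcal O_X)$ lives in degrees $g+n-j\geq g\geq n$, using that generic finiteness gives $n\leq g$), but you should write this out rather than defer it, since it is the only place the generically-finite hypothesis enters.
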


At first sight, the above result appears to be quite technical however it has many
concrete applications (see for example \cite{CH11}, \cite{JLT11} and \cite{PP09}).
In this paper we will show that \eqref{GV} does not generalize to characteristic
$p>0$ or to singular varieties in characteristic $0$.

\begin{notation}\label{not:ell}
  Let $A$ be an abelian variety over an algebraically closed field $k$, $\widehat A$
  its dual abelian variety, $\mathcal P$ the normalized Poincar\'e bundle on $A\times
  \widehat A$ and $p _{\widehat A}:A\times \widehat A\to \widehat A$ the projection.
  Let $\lambda :X\to A$ be a projective morphism, $\pi _{\widehat A}:X\times \widehat
  A\to \widehat A$ the projection and $\mathcal L:=(\lambda\times{\id}_{\widehat
    A})^*\mathcal P$ where $(\lambda\times{\id}_{\widehat A}):X\times \widehat A\to
  A\times \widehat A$ is the product morphism.
\end{notation}

\begin{thm}\label{t-1}
  Let $k$ be an algebraically closed field. Then, using
  the notation in \eqref{not:ell}, there exist a projective variety $X$ over $k$ such
  that
  \begin{itemize}
  \item if $\charact k=p>0$, then $X$ is smooth, and 
  \item if $\charact k=0$, then $X$ has isolated Gorenstein log canonical
    singularities,
  \end{itemize}
  and a separated projective morphism to an abelian variety $\lambda :X\to A$ which is generically finite onto its
  image and such that
  $$R^i{\pi _{\widehat A}}_*\mathcal L\ne 0\qquad {\rm for\ some}\ 0\leq i<n.$$  
%  If $\charact k=p>0$, then $X$ may be chosen to be smooth.
\end{thm}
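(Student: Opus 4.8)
The plan is to reduce the statement to a sheaf-theoretic assertion about $\lambda$ via the Fourier--Mukai transform, and then to produce a map whose higher direct images carry a zero-dimensional piece. Write $p_A\colon A\times\widehat A\to A$ for the first projection (so $p_{\widehat A}$ is the one fixed in \eqref{not:ell}). Since $\lambda\times\id_{\widehat A}$ is proper and $\mathcal L=(\lambda\times\id_{\widehat A})^*\mathcal P$, the projection formula and flat base change give
$$
R{\pi_{\widehat A}}_*\mathcal L\;\cong\;R{p_{\widehat A}}_*\bigl(\mathcal P\otimes^{\mathbf L}p_A^*R\lambda_*\mathcal O_X\bigr)\;=:\;R\widehat S\bigl(R\lambda_*\mathcal O_X\bigr),
$$
where $R\widehat S$ is the Fourier--Mukai functor attached to $\mathcal P$. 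The mechanism I will exploit is that the Fourier--Mukai transform of a skyscraper $k(a)$ is the line bundle $\mathcal P|_{\{a\}\times\widehat A}$ placed in degree $0$. Hence, if for some $0\le q<n$ the sheaf $R^q\lambda_*\mathcal O_X$ has a nonzero summand supported at a point $a\in A$, that summand contributes a nonzero line bundle to $R^q{\pi_{\widehat A}}_*\mathcal L$, so $R^q{\pi_{\widehat A}}_*\mathcal L\ne0$. It therefore suffices to build a generically finite (onto its image) $\lambda\colon X\to A$ for which \emph{$R^q\lambda_*\mathcal O_X$ has $0$-dimensional support for some $0\le q<n$}; equivalently, $\lambda$ must contract a positive-dimensional fibre $F$ with $H^q(F,\mathcal O_F)\ne0$ to a point. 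I will arrange $q\le n-2$ (so $n\ge3$) so that the connecting maps in the triangle relating $R\widehat S(R\lambda_*\mathcal O_X)$ to $R\widehat S(\lambda_*\mathcal O_X)$ cannot absorb this class.

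Such a skyscraper forces $\lambda$ to be \emph{non-finite} (otherwise $R^{>0}\lambda_*=0$), i.e. to perform a genuinely non-rational contraction of an $F$ with $H^q(\mathcal O_F)\ne0$. This is exactly what the two hypotheses of the theorem make possible. Indeed, were $X$ smooth and $\charact k=0$, Grauert--Riemenschneider vanishing would give $R^{>0}\lambda_*\omega_X=0$; since $\omega_A=\mathcal O_A$, Grothendieck duality on $A$ reads
$$
R\lambda_*\omega_X\;\cong\;R\mathcal{H}om_A\bigl(R\lambda_*\mathcal O_X,\mathcal O_A\bigr),
$$
and the vanishing of the left side obstructs a point-supported class in $R^q\lambda_*\mathcal O_X$ for $q<n$ (this is, in effect, why \eqref{GV} holds for smooth varieties in characteristic $0$). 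The obstruction evaporates precisely where Grauert--Riemenschneider fails: for log canonical, non-klt (hence non-rational) singularities in characteristic $0$, and for smooth varieties in characteristic $p>0$.

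For the two constructions I will take $F$ to be a subvariety with $H^q(\mathcal O_F)\ne0$, for instance an abelian variety or a curve of genus $\ge1$, realised as a contracted fibre of $\lambda$. In characteristic $p>0$ I will keep $X$ \emph{smooth} and use a Frobenius/Tango--Raynaud type construction in which Grauert--Riemenschneider fails, so that $R^q\lambda_*\mathcal O_X$ is a nonzero skyscraper although $X$ is smooth. In characteristic $0$ I will instead let the contraction itself create the singularity: contracting an abelian variety (or an elliptic curve) $F$ with $\omega_F\cong\mathcal O_F$ produces an isolated Gorenstein log canonical, non-rational singularity, and $R^q\lambda_*\mathcal O_X$ acquires the skyscraper $H^q(F,\mathcal O_F)\otimes k(a)\ne0$. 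In either case the displayed Fourier--Mukai computation yields $R^q{\pi_{\widehat A}}_*\mathcal L\ne0$ for some $0\le q<n$, which is the desired failure.

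The hard part is not the cohomological mechanism but the \emph{globalisation}: producing an honest projective morphism to an \emph{abelian variety} $A$ that is generically finite onto its image and contracts the chosen $F$ to a point, while simultaneously (i) keeping $X$ smooth in characteristic $p$ and (ii) controlling the singularity to be \emph{exactly} isolated Gorenstein log canonical in characteristic $0$. Whereas the local non-rational contraction of $F$ (e.g. as the vertex of a cone over a polarized abelian variety, or the negative section of a suitable $\mathbb P^1$-bundle) is standard, promoting it to a generically finite map \emph{to $A$} — rather than merely to the contracted normal variety — and checking that the skyscraper is not cancelled inside $R\widehat S(R\lambda_*\mathcal O_X)$ is where the real work lies; this is the reason for imposing $q\le n-2$, so that $R^{q+1}\widehat S(\lambda_*\mathcal O_X)$ does not receive the relevant connecting morphism.
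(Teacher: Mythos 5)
Your high-level strategy --- contract a positive-dimensional fibre $F$ with $H^q(F,\mathcal O_F)\neq 0$ under a generically finite map to an abelian variety, and transfer the resulting non-vanishing through the Fourier--Mukai identification $R{\pi_{\widehat A}}_*\mathcal L\simeq \mathbf R\widehat S(\mathbf R\lambda_*\mathcal O_X)$ --- is the right one and matches the paper's in spirit. But both halves of your argument are left as unproved claims, and the one step you do try to justify is justified incorrectly. The skyscraper summand of $R^q\lambda_*\mathcal O_X$ lives at $E_2^{0,q}=R^0\widehat S(R^q\lambda_*\mathcal O_X)$ in the spectral sequence $E_2^{p,q}=R^p\widehat S(R^q\lambda_*\mathcal O_X)\Rightarrow R^{p+q}\widehat S(\mathbf R\lambda_*\mathcal O_X)$, and it can be killed by any of the differentials $d_r\colon E_r^{0,q}\to E_r^{r,\,q-r+1}$ for $2\le r\le q+1$. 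Your condition $q\le n-2$ excludes none of these: already in the relevant case $q=1$, $n=3$ the differential lands in $R^2\widehat S(\lambda_*\mathcal O_X)$, which is a nonzero sheaf in general (the fibres of $p_{\widehat A}$ over the support of $\lambda_*\mathcal O_X$ are $n$-dimensional, so $R^j\widehat S(\lambda_*\mathcal O_X)$ need not vanish for any $j\le n$), and you give no reason why the line bundle $\mathcal P_a$ should map to zero there. The paper avoids this problem entirely by running the argument through $\omega_X$ rather than $\mathcal O_X$: Proposition \eqref{p-2} shows, via Pareschi--Popa's Theorem A, Lemma \eqref{l-c}, the WIT formalism and an isogeny trick with $\phi_L$, that $R^i{\pi_{\widehat A}}_*\mathcal L=0$ for $i<n$ would force $R^i\lambda_*\omega_X=0$ for all $i>0$; one then only has to exhibit $\lambda$ with $R^i\lambda_*\omega_X\neq 0$. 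Your Grothendieck-duality aside gestures at this reformulation but you never close the loop.

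The second gap is that the constructions, which you yourself flag as ``where the real work lies,'' are not carried out. Producing a \emph{generically finite} morphism to an abelian variety that contracts the chosen $F$, while keeping $X$ smooth (characteristic $p$) or exactly isolated Gorenstein log canonical (characteristic $0$), is the content of Proposition \eqref{p-1}: in characteristic $0$ one takes the blow-up of the cone over $E_1\times E_2$ from \eqref{e-1} (where Gorenstein-ness already requires the projective-normality analysis of \eqref{claim:CM} and \eqref{lem:proj-normal}) and glues it to $A$ by forming a fibre product over $\mathbb P^n$ with a blown-up abelian variety that is \'etale over the relevant point; in characteristic $p$ one slices $Y\times A$ by general hyperplanes, where $Y$ is the cone over a Lauritzen--Rao counterexample to Kodaira vanishing as in \eqref{e-2} (not a Frobenius/Tango--Raynaud construction, though those would also work), and checks finiteness near the singular points by a strict-transform argument. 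Without these two ingredients --- a correct survival argument for the cohomology class and an actual construction of $\lambda$ --- the proposal is a plausible outline rather than a proof.
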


\begin{rmk}
  Due to the birational nature of the statement, \eqref{GV} trivially generalizes to the case of
  $X$ having only rational singularities. Arguably Gorenstein log canonical
  singularities are the simplest examples of singularities that are not rational.  Therefore the
  characteristic $0$ part of \eqref{t-1} may be interpreted as saying that generic
  vanishing does not extended to singular varieties in a non-trivial way.
\end{rmk}

\begin{rmk}
  Note that \eqref{t-1} seems to contradict the main result of \cite{Pareschi03}.
\end{rmk}
\hskip.3cm

\noindent{\bf Acknowledgments.}
The authors would like to thank G. Pareschi, A. Langer and M. Popa for useful
discussions and comments.  The first named author was partially supported by NSF
research grant DMS-0757897 and DMS-1300750 and a grant from the Simons foundation.
The second named author was supported in part by NSF Grants DMS-0856185 and
DMS-1301888 and the Craig McKibben and Sarah Merner Endowed Professorship in
Mathematics at the University of Washington.

\section{Preliminaries}\label{s-pre}

Let $A$ be a $g$-dimensional abelian variety over an algebraically closed field $k$,
$\widehat A$ its dual abelian variety $p_A$ and $p_{\widehat A}$ the projections of
$A\times \widehat A$ onto $A$ and $\widehat A$, and $\mathcal P$ the normalized
Poincar\'e bundle on $A\times \widehat A$. We denote by $\mathbf R \widehat S:
\mathbf D(A)\to \mathbf D(\widehat A)$ the usual Fourier-Mukai functor given by
$\mathbf R \widehat S(\F )=\mathbf R {p_{\widehat A}}_*(\hkL p_A^*\F \otimes \mathcal
P)$ cf.\ \cite{Mukai81}.  There is a corresponding functor $\mathbf R S: \mathbf
D(\widehat A)\to \mathbf D( A)$ such that $$\mathbf R S\circ \mathbf R \widehat
S=(-1_A)^*[-g]\qquad {\rm and }\qquad \mathbf R \widehat S\circ \mathbf R
S=(-1_{\widehat A})^*[-g].$$

\begin{defn}
  An object $F\in \mathbf D(A)$ is called \emph{WIT-$i$} if $R ^j \widehat S (F)=0$
  for all $j\ne i$.  In this case we use the notation $\widehat F= R ^i \widehat S
  (F)$.
\end{defn}
Notice that if $F$ is a WIT-$i$ coherent sheaf (in degree $0$), then $\widehat F$ is
a WIT-$(g-i)$ coherent sheaf (in degree $i$) and $F\simeq (-1_A)^*R^{g-i}S (\widehat
F)$.

One easily sees that if $F$ and $G$ are arbitrary objects, then
$${\rm Hom}_{\mathbf D (A)}(F,G)={\rm Hom}_{\mathbf D (\widehat   A)}(\mathbf R
\widehat S F,\mathbf R \widehat S G).$$ An easy consequence (cf.\
\cite[2.5]{Mukai81}) is that if $F$ is a WIT-$i$ sheaf and $G$ is a WIT-$j$ sheaf (or if $F$ is a WIT-$i$ locally free sheaf and $G$ is a WIT-$j$ object -- not
necessarily a sheaf), then

\begin{multline}
  \label{eq-0}
  {\rm Ext} ^k_{\mathcal O _A}(F,G) \simeq {\rm Hom}_{\mathbf D(A)}(F,G[k])\simeq \\
  \simeq {\rm Hom}_{\mathbf D(\widehat A)}(\mathbf R\widehat S F,\mathbf R\widehat S
  G[k])=\\
  = {\rm Hom}_{\mathbf D(\widehat A)}(\widehat F [-i],\widehat G[k-j])\simeq {\rm
    Ext} ^{k+i-j}_{\mathcal O _{\widehat A}}(\widehat F,\widehat G).
\end{multline}

Let $L$ be any ample line bundle on $\widehat A$, then $ \mathbf RS(L)= R^0 S(L)=
\widehat L$ is a vector bundle on $A$ of rank $h^0(L)$.  For any $x\in A$, let
$t_x:A\to A$ be the translation by $x$ and let $\phi _L:\widehat A \to A$ be the
isogeny determined by $\phi _L (\widehat x)=t^*_{\widehat x}L\otimes L^\vee $, then
$\phi _L^*(\widehat{L})=\bigoplus _{h^0(L)}L^\vee$.

Let $\lambda :X\to A$ be a projective morphism of normal varieties, and $\mathcal
L=(\lambda\times {\id}_{\widehat A})^*\mathcal P$. We let $\mathbf R\Phi :\mathbf D
(X)\to \mathbf D(\widehat A)$ be the functor defined by $\mathbf R\Phi (F)=\mathbf
R{\pi _{\widehat A}}_*(\hkL\pi _X^*F\otimes \mathcal L)$ where $\pi _X$ and $\pi
_{\widehat A}$ denote the projections of $X\times \widehat A$ on to the first and
second factor.  Note that
\begin{multline}\label{eq-1}
  \mathbf R\Phi (F)=\mathbf R{\pi _{\widehat A}}_*(\hkL\pi_X^*F\otimes \mathcal
  L)\simeq^1 \\
  \simeq \mathbf R{p _{\widehat A}}_*\mathbf R(\lambda\times {\id}_{\widehat
    A})_*(\hkL\pi_X^*F\otimes (\lambda\times {\id}_{\widehat A})^*\mathcal P)\simeq^2
  \\
  \simeq \mathbf R{p _{\widehat A}}_*\left(\mathbf R(\lambda\times {\id}_{\widehat
      A})_*(\hkL\pi_X^*F)\otimes\mathcal P\right)\simeq^3
  \\
  \simeq \mathbf R{p _{\widehat A}}_*(\hkL p_A^*\mathbf R\lambda_*F\otimes \mathcal
  P)\simeq \mathbf R \widehat S (\mathbf R\lambda_*F ),
\end{multline} 
where $\simeq^1$ follows by composition of derived functors \cite[II.5.1]{MR0222093},
$\simeq^2$ follows by the projection formula \cite[II.5.6]{MR0222093}, and $\simeq^3$
follows by flat base change \cite[II.5.12]{MR0222093}.

We also define $\mathbf R\Psi :\mathbf D (\widehat A)\to \mathbf D(X)$ by $\mathbf
R\Psi (F)=\mathbf R{\pi _X}_*(\hkL\pi _{\widehat A}^*F\otimes \mathcal L)$.
Notice that if $F$ is a locally free sheaf, then $\hkL\pi _{\widehat
  A}^*F\otimes \mathcal L$ is also a locally free sheaf. In particular, for any $i\in \mathbb Z$, we have that
\begin{equation}
  \label{eq:Rg}
  R^i\Psi(F)\simeq R^i{\pi_X}_*(\pi_{\widehat A}^*F\otimes\sL).
\end{equation}

We will need the following fact (which is also proven during the proof of Theorem B of \cite{PP11}).
\begin{lemma}\label{l-c} 
  Let $L$ be an ample line bundle on $\widehat A$, then
  $$\mathbf R \Psi (L^{\vee})=R^g\Psi (L^{\vee})=\lambda^*\widehat {L^{\vee}}.$$
\end{lemma}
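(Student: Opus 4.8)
The plan is to reduce the computation of $\mathbf R\Psi(L^\vee)$ to a Fourier--Mukai transform on $A$, exactly paralleling the identification $\mathbf R\Phi(F)\simeq\mathbf R\widehat S(\mathbf R\lambda_*F)$ established in \eqref{eq-1}. Concretely, I would first prove the base-change identity
\[
\mathbf R\Psi(F)\simeq\mathbf L\lambda^*\,\mathbf R S(F)\qquad\text{for all }F\in\mathbf D(\widehat A),
\]
and then specialize to $F=L^\vee$, where I will show that $\mathbf R S(L^\vee)$ is a single locally free sheaf sitting in degree $g$.

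For the base-change identity, observe that $\pi_{\widehat A}=p_{\widehat A}\circ(\lambda\times\id_{\widehat A})$ and $\sL=(\lambda\times\id_{\widehat A})^*\mathcal P$, so since pullback commutes with tensor products we have $\pi_{\widehat A}^*F\otimes\sL\simeq(\lambda\times\id_{\widehat A})^*(p_{\widehat A}^*F\otimes\mathcal P)$ and hence $\mathbf R\Psi(F)=\mathbf R{\pi_X}_*\,\mathbf L(\lambda\times\id_{\widehat A})^*(p_{\widehat A}^*F\otimes\mathcal P)$. The square
\[
\begin{CD}
X\times\widehat A @>{\lambda\times\id_{\widehat A}}>> A\times\widehat A\\
@V{\pi_X}VV @VV{p_A}V\\
X @>{\lambda}>> A
\end{CD}
\]
is cartesian. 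Note that the flat base change of \cite[II.5.12]{MR0222093}, used for $\simeq^3$ in \eqref{eq-1} by pulling back along the flat projection $p_A$, does not apply verbatim here, since in the present direction one must pull back along $\lambda$, which is not assumed flat. However, $p_A$ is flat, so the two maps are Tor-independent over $A$ and derived base change still gives $\mathbf R{\pi_X}_*\,\mathbf L(\lambda\times\id_{\widehat A})^*\simeq\mathbf L\lambda^*\,\mathbf R{p_A}_*$; applying this to $p_{\widehat A}^*F\otimes\mathcal P$ and recalling that $\mathbf R S(F)=\mathbf R{p_A}_*(p_{\widehat A}^*F\otimes\mathcal P)$ is the Fourier--Mukai functor in the opposite direction yields the claim.

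It remains to compute $\mathbf R S(L^\vee)$, and here I claim $L^\vee$ is WIT-$g$ with $\widehat{L^\vee}=R^gS(L^\vee)$ locally free, mirroring the stated fact that the ample bundle $L$ is WIT-$0$ with $\widehat L$ locally free. I would argue by cohomology and base change: for $a\in A$ the fibre cohomology of $\mathbf R S(L^\vee)$ is $H^j(\widehat A,L^\vee\otimes Q_a)$, where $Q_a=\mathcal P|_{\{a\}\times\widehat A}$ is numerically trivial. Since $L\otimes Q_a^\vee$ is ample, Serre duality on $\widehat A$ (using $\omega_{\widehat A}\cong\sO_{\widehat A}$) combined with Mumford's vanishing gives $H^j(\widehat A,L^\vee\otimes Q_a)=0$ for $j<g$ and $\dim H^g(\widehat A,L^\vee\otimes Q_a)=h^0(\widehat A,L)$, constant in $a$. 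Grauert's theorem then forces $R^jS(L^\vee)=0$ for $j\neq g$ and $R^gS(L^\vee)$ locally free, so $\mathbf R S(L^\vee)\simeq\widehat{L^\vee}[-g]$.

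Combining the two steps, $\mathbf R\Psi(L^\vee)\simeq\mathbf L\lambda^*(\widehat{L^\vee}[-g])$; since $\widehat{L^\vee}$ is locally free its derived pullback is the ordinary pullback concentrated in degree $0$, so $R^i\Psi(L^\vee)=0$ for $i\neq g$ and $R^g\Psi(L^\vee)=\lambda^*\widehat{L^\vee}$, as asserted. I expect the base-change step of the second paragraph to be the main obstacle: because $\lambda$ is only projective and $X$ may be singular, flat base change cannot be invoked naively, and one must keep all pullbacks derived and rely on the flatness of $p_A$ for Tor-independence until $\widehat{L^\vee}$ is known to be a vector bundle.
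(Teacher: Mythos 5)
Your proof is correct, but it is packaged differently from the one in the paper. You obtain the comparison isomorphism $\mathbf L\lambda^*\,\mathbf R S(L^{\vee})\simeq \mathbf R\Psi(L^{\vee})$ in one stroke by invoking the general derived base change theorem for Tor-independent cartesian squares, Tor-independence holding because $p_A$ is flat; and you are right that the flat base change of \cite[II.5.12]{MR0222093} used for $\simeq^3$ in \eqref{eq-1} does not apply verbatim here, since $\lambda$ need not be flat. The paper avoids this heavier machinery: it first notes, exactly as you do via Serre duality and vanishing for the ample bundles $L\otimes\mathcal P_{\lambda(x)}^{\vee}$, that $H^i(\widehat A, L^{\vee}\otimes\mathcal L_x)=0$ for $i\ne g$, so that by cohomology and base change both $R^g\Psi(L^{\vee})$ and $\widehat{L^{\vee}}$ are vector bundles of rank $h^g(\widehat A,L^{\vee})$; it then constructs the base-change morphism $\eta:\lambda^*\widehat{L^{\vee}}\to R^g\Psi(L^{\vee})$ by hand, as a composite of the unit of adjunction for $(\lambda\times\id_{\widehat A})$, two Grothendieck spectral sequence edge maps for the factorizations of $\sigma=p_A\circ(\lambda\times\id_{\widehat A})=\lambda\circ\pi_X$, and adjunction for $\lambda$; finally it checks that $\eta$ is an isomorphism fiber by fiber, which suffices since both sides are locally free. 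So the two arguments rest on the same fiberwise cohomology computation; yours buys the cleaner and more general functorial statement $\mathbf R\Psi\simeq\mathbf L\lambda^*\circ\mathbf R S$ at the price of citing the Tor-independent base change theorem, while the paper's stays within elementary cohomology-and-base-change territory by exhibiting the comparison map explicitly and testing it on fibers.
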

\begin{proof}
  Since $L$ is ample, $H^i(\widehat A, L^{\vee }\otimes \mathcal L _x)=H^i(\widehat A, L^{\vee }\otimes \mathcal P _{\lambda(x)})=0$ for $i\ne g$
where $\mathcal P _{\lambda(x)}=\mathcal P |_{\lambda(x)\times \widehat A}$ and
  $\mathcal L _{x}=\mathcal L |_{x\times \widehat A}$ are isomorphic. By cohomology and
  base change $\mathbf R \Psi (L^{\vee })=R^g\Psi (L^{\vee })$ (resp.  $\widehat {L^{\vee }}$)
  is a vector bundle of rank $h^g(\widehat A, L^{\vee })$ on $X$ (resp. on $A$).

  The a natural transformation 
  $\id_{A\times \widehat A}\to (\lambda\times \id_{\widehat A})_*(\lambda\times
  \id_{\widehat A})^*$ induces a natural morphism,
  $$
  \widehat {L^{\vee }}= R^g{p_A}_*(p_{\widehat A}^*L^{\vee }\otimes \mathcal P)\to
  R^g{p_A}_*(\lambda\times {\id}_{\widehat A})_*(\pi _{\widehat A}^*L^{\vee }\otimes \mathcal
  L).
  $$ 

  Let $\sigma=p_A\circ (\lambda\times {\id}_{\widehat A})=\lambda\circ\pi_X$.  By the
  Grothendieck spectral sequence associated to ${p_A}_*\circ (\lambda\times
  {\id}_{\widehat A})_*$ there exists a natural morphism
  $$
  R^g{p_A}_*(\lambda\times {\id}_{\widehat A})_*(\pi _{\widehat A}^*L^{\vee }\otimes \mathcal
  L)\to R^g {\sigma}_*(\pi _{\widehat A}^*L^{\vee }\otimes \mathcal L),
  $$
  and similarly by the Grothendieck spectral sequence associated to $\lambda_*\circ
  {\pi_X}_*$ there exists a natural morphism
  $$
  R^g {\sigma}_*(\pi _{\widehat A}^*L^{\vee }\otimes \mathcal L)\to
  \lambda_*R^g{\pi_X}_*(\pi _{\widehat A}^*L^{\vee }\otimes \mathcal L).
  $$
  Combining the above three morphisms gives a natural morphism
  $$
  \widehat{L^{\vee }}\to \lambda_*R^g {\pi _X}_*(\pi _{\widehat A}^*L^{\vee }\otimes
  \mathcal L) = \lambda_*R^g\Psi (L^{\vee }),
  $$
  and hence by adjointness a natural morphism,
  $$
  \eta: \lambda^*\widehat {L^{\vee }}\to R^g\Psi (L^{\vee }).
  $$ 
  For any point $x\in X$, by cohomology and base change, the induced morphism on the
 {fiber} over $x$ is an isomorphism:
  \begin{multline*}
    \eta _x: %(
    \lambda^*\widehat {L^{\vee }} %)_x
    \otimes \kappa(x) \simeq H^g(\lambda(x)\times \widehat A,
    L^{\vee}\otimes    \mathcal P _{\lambda(x)})\overset{\simeq}{\longrightarrow} \\
    \overset{\simeq}{\longrightarrow} H^g(x\times \widehat A, L^{\vee }\otimes
    \mathcal L_{x})\simeq %(
    R^g\Psi (L^{\vee })%)_x
    \otimes \kappa(x).
  \end{multline*}
   Therefore
  $\eta_x$ is an isomorphism for all $x\in X$ and hence $\eta$ is an isomorphism.
\end{proof}

\section{Examples}

\begin{notation}
  Let $T\subseteq \P^n$ be a projective variety. The cone over $T$ in $\A^{n+1}$ will
  be denoted by $C(T)$. In other words, if $T\simeq \Proj S$, then $C(T)\simeq \Spec
  S$.

  Linear equivalence between (Weil) divisors is denoted by $\sim$ and strict
  transform of a subvariety
  $T$ by the inverse of a birational morphism $\sigma$ is denoted by
  $\sigma^{-1}_*T$.
\end{notation}

\begin{ex}
  \label{ex:meta}
  Let $k$ be an algebraically closed field, $V\subseteq \P^n$ and $W\subseteq \P^m$
  two smooth projective varieties over $k$, and $p\in V$ a closed point.  Let
  $x_0,\dots,x_n$ and $y_0,\dots,y_m$ be homogenous coordinates on $\P^n$ and $\P^m$
  respectively.

  Consider the embedding $V\times W\subset \P^N$ induced by the Segre embedding of
  $\P^n\times \P^m$. We may choose homogenous coordinates $z_{ij}$ for $i=0,\dots,n$
  and $j=0,\dots,m$ on $\P^N$ and in these coordinates $\P^n\times \P^m$ is defined
  by the equations $z_{\alpha\gamma}z_{\beta\delta}-z_{\alpha\delta}z_{\beta\gamma}$
  for all $0\leq \alpha,\beta\leq n$ and $0\leq \gamma,\delta\leq m$.

  Next let $H\subset W$ such that $\{p\}\times H \subset \{p\}\times W$ is a
  hyperplane section of $\{p\}\times W$ in $\P^N$. Let $Y=C(V\times W)\subset
  \A^{N+1}$ and $Z=C(V\times H)\subset Y$ and let $v\in Z\subset Y$ denote the common
  vertex of $Y$ and $Z$.  If $\dim W=0$, then $H=\emptyset$. In this case let
  $Z=\{v\}$ the vertex of $Y$.  Finally let $\mathfrak m_v$ denote the ideal of $v$
  in the affine coordinate ring of $Y$. It is generated by all the variables
  $z_{ij}$.
\end{ex}

\begin{prop}\label{claim:Ex-of-blow-up}
  Let $f:X\to Y$ be the blowing up of $Y$ along $Z$.  Then $f$ is an isomorphism over
  $Y\setminus\{v\}$ and the scheme theoretic preimage of $v$ (whose support is the
  exceptional locus) is isomorphic to $V$:
  $$f^{-1}(v)\simeq V.$$
\end{prop}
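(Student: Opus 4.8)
The plan is to choose explicit coordinates, pin down the ideal $I_Z\subset S$ of $Z$ in the coordinate ring $S$ of $Y$, and read both assertions off the Rees algebra $\bigoplus_{d\ge0}I_Z^d$. First I would normalize the setup: since $\{p\}\times H$ is a hyperplane section of $\{p\}\times W\subset\P^N$, after a linear change of the $y_j$ on $\P^m$ I may assume $H=W\cap\{y_0=0\}$. On $V\times W$ in the Segre embedding $z_{ij}=x_iy_j$, so $\{y_0=0\}$ is the locus $z_{00}=\dots=z_{n0}=0$, and comparing graded pieces shows that the homogeneous ideal of $Z=C(V\times H)$ in $S$ is exactly $I_Z=(z_{00},\dots,z_{n0})$. (When $\dim W=0$ this degenerates to $I_Z=\mathfrak m_v$, the classical blow-up of the cone vertex, and the conclusion is immediate.)

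Next I would prove the isomorphism over $Y\setminus\{v\}$. As $Y$ is integral it suffices to show $I_Z$ is locally principal there. Off $Z$ some $z_{i0}$ is a unit, so $I_Z=(1)$. At a point $q\in Z\setminus\{v\}$ some $z_{i_0j_0}$ with $j_0>0$ is a unit, and the Segre relation $z_{i0}z_{i_0j_0}=z_{ij_0}z_{i_00}$ shows $z_{i0}\in(z_{i_00})$ locally for every $i$; hence $I_Z=(z_{i_00})$ is invertible near $q$. Blowing up an invertible ideal is an isomorphism, so $f$ is an isomorphism over $Y\setminus\{v\}$ and the exceptional locus sits over $v$.

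The heart of the argument is the scheme-theoretic fiber. Writing $X=\Proj_Y\bigoplus_dI_Z^d$ and $\{v\}=\Spec(S/\mathfrak m_v)$, base change for $\Proj$ gives $f^{-1}(v)=\Proj F$, where $F=\bigoplus_dI_Z^d/\mathfrak m_vI_Z^d$ is the special fiber ring; the assignment $u_i\mapsto[z_{i0}]$ defines a surjection $k[u_0,\dots,u_n]\twoheadrightarrow F$, so $f^{-1}(v)\hookrightarrow\P^n$ and everything reduces to computing the kernel. Since $I_Z$ is generated in degree $1$, the ideal $I_Z^d$ starts in degree $d$ and $(\mathfrak m_vI_Z^d)_d=0$; thus $\sum_{|\alpha|=d}c_\alpha u^\alpha$ lies in the kernel iff $\sum_\alpha c_\alpha\prod_iz_{i0}^{\alpha_i}=0$ in $S_d$, i.e.\ iff this form vanishes on $V\times W$. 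On $V\times W$ the monomial $\prod_iz_{i0}^{\alpha_i}$ equals $x^\alpha y_0^d$, so the condition becomes the vanishing of $(\sum_\alpha c_\alpha x^\alpha)y_0^d$; as $y_0$ is not identically zero on the irreducible $W$, this holds iff $\sum_\alpha c_\alpha x^\alpha\in I_V$. Hence the kernel is $I_V$, so $F\cong k[x_0,\dots,x_n]/I_V$ is the homogeneous coordinate ring of $V$ and $f^{-1}(v)=\Proj F\cong V$.

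The main obstacle is this last step: showing the special fiber ring is reduced and equals the homogeneous coordinate ring of $V$, equivalently that the only relations among the classes of the $z_{i0}$ come from $I_V$. What makes it go through is the vanishing $(\mathfrak m_vI_Z^d)_d=0$, which forces every relation into the top graded piece, together with the factorization $\prod_iz_{i0}^{\alpha_i}=x^\alpha y_0^d$ on $V\times W$, which lets the $W$-direction cancel out cleanly and isolates $I_V$.
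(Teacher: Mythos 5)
Your proposal is correct and follows essentially the same route as the paper: both identify $f^{-1}(v)$ with $\Proj\bigoplus_{d\ge0}I_Z^d/\mathfrak m_vI_Z^d$ and compute this fiber ring explicitly in the Segre coordinates $z_{ij}$. The differences are only organizational: you run the kernel computation in a single pass using $(\mathfrak m_vI_Z^d)_d=0$ together with the factorization $\prod_iz_{i0}^{\alpha_i}=x^\alpha y_0^d$ on $V\times W$, where the paper bootstraps through the cases $V=\P^n,\ W=\P^m$, then general $V$, then general $W$; and you check local principality of $I_Z$ away from the vertex directly from the Segre relations, where the paper simply observes that $Z\setminus\{v\}$ has codimension one in the smooth locus $Y\setminus\{v\}$ and is therefore Cartier.
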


\begin{proof}
%    [Proof of~{\rm \ref{claim:Ex-of-blow-up}}]%
  As $Z$ is of codimension $1$ in $Y$ and $Y\setminus\{v\}$ is smooth, it follows
  that $Z\setminus\{v\}$ is a Cartier divisor in $Y\setminus\{v\}$ and hence $f$ is
  indeed an isomorphism over $Y\setminus\{v\}$.

  To prove the statement about the exceptional locus of $f$, first assume that
  $V=\P^n$, $W=\P^m$, $p=[1:0:\dots:0]$, and $\{p\}\times H=(z_{0m}=0)\cap
  (\{p\}\times W)$. Then $H=(y_m=0)\subseteq W$ and hence $I=I(Z)$, the ideal of $Z$
  in the affine coordinate ring of $Y$, is generated by $\{z_{im}\vert
  i=0,\dots,n\}$. Then by the definition of blowing up, $X=\Proj \oplus_{d\geq 0}
  I^d$ and $f^{-1}v\simeq \Proj \oplus_{d\geq 0} I^d/I^{d}\mathfrak m_v$.

  Notice that $I^d/I^{d}\mathfrak m_v$ is a $k$-vector space generated by the degree
  $d$ monomials in the variables $\{z_{im}\vert i=0,\dots,n\}$. It follows that the
  graded ring $\oplus_{d\geq 0} I^d/I^{d}\mathfrak m_v$ is nothing else but
  $k[z_{im}\vert i=0,\dots,n]$ and hence $f^{-1}v\simeq \P^n=V$, so the claim is
  proved in this case.

  Next consider the case when $V\subseteq \P^n$ is arbitrary, but $W=\P^m$. In this
  case the calculation is similar, except that we have to account for the defining
  equations of $V$. They show up in the definition of the coordinate ring of $Y$ in
  the following way: If a homogenous polynomial $g\in k[x_0,\dots,x_n]$ vanishes on
  $V$ (i.e., $g\in I(V)_h$), then define $g_\gamma\in k[z_{ij}]$ for any
  $0\leq\gamma\leq m$ by replacing $x_\alpha$ with $z_{\alpha\gamma}$ for each
  $0\leq\alpha\leq n$. Then $\{g_\gamma \vert 0\leq\gamma\leq m, g\in I(V)_h\}$
  generates the ideal of $Y$ in the affine coordinate ring of $C(\P^n\times\P^m)$.
  It follows that the above computation goes through the same way, except that the
  variables $\{z_{im}\vert i=0,\dots,n\}$ on the exceptional $\mathbb P^n$ are
  subject to the equations $\{g_m \vert g\in I(V)_h\}$. However, this simply means
  that the exceptional locus of $f$, i.e., $f^{-1}v$, is cut out from $\P^n$ by these
  equations and hence it is isomorphic to $V$.

  Finally, consider the general case. The way $W$ changes the setup is the same as
  what we described for $V$. If a homogenous polynomial $h\in k[y_0,\dots,y_m]$
  vanishes on $W$ (i.e., $h\in I(W)_h$), then define $h_\alpha\in k[z_{ij}]$ for any
  $0\leq\alpha\leq n$ by replacing $y_\gamma$ with $z_{\alpha\gamma}$ for each
  $0\leq\gamma\leq m$. Then $\{h_\alpha \vert 0\leq\alpha\leq n, h\in I(W)_h\}$
  generates the ideal of $Y$ in the affine coordinate ring of $C(V\times\P^m)$.

  However, in this case, differently from the case of $V$, we do not get any
  additional equations. Indeed, we chose the coordinates so that $H=(y_m=0)$ and
  hence $y_m\not\in I(W)$, which means that we may choose the rest of the coordinates
  such that $[0:\dots:0:1]\in W$. This implies that no polynomial in the ideal of $W$
  may have a monomial term that is a constant multiple of a power of $y_{m}$. It
  follows that, since $I=I(Z)$ is generated by the elements $\{z_{im}\vert
  i=0,\dots,n\}$, any monomial term of any polynomial in the ideal of $Y$ in the
  affine coordinate ring of $C(V\times\P^m)$ that lies in $I^d$ for some $d>0$, also
  lies in $I^d\mathfrak m_v$.  Therefore these new equations do not change the ring
  $\oplus I^d/I^d\mathfrak m_v$ and so $f^{-1}v$ is still isomorphic to $V$.
\end{proof}
  
\begin{notation}
  We will use the notation introduced in \eqref{claim:Ex-of-blow-up} for $X$, $Y$,
  $Z$, and $f$. We will also use $X_{\P}$, $Y_{\P}$, $Z_{\P}$, and $f_{\P}:X_{\P}\to
  Y_{\P}$ to denote the same objects in the case $W=\P^m$, i.e., $Y_{\P}=C(V\times
  \P^m)$, $Z_{\P}=C(V\times H)$ where $H\subset \P^m$ is such that $\{p\}\times H
  \subset \{p\}\times \P^m$ is a hyperplane section of $\{p\}\times \P^m$ in $\P^N$.
\end{notation}

\begin{cor}
  $f_{\P}$ is an isomorphism over $Y_{\P}\setminus\{v\}$ and the scheme theoretic
  preimage of $v$ (whose support is the exceptional locus) via $f_{\P}$ is isomorphic
  to $V$:
  $$f_{\P}^{-1}v\simeq V.$$
\end{cor}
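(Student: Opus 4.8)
The plan is to observe that this corollary is simply the special case $W=\P^m$ of Proposition~\eqref{claim:Ex-of-blow-up}, so that the whole statement follows by direct specialization. Recall that the notation $X_{\P}$, $Y_{\P}=C(V\times\P^m)$, $Z_{\P}=C(V\times H)$, and $f_{\P}:X_{\P}\to Y_{\P}$ was introduced precisely to denote the objects $X$, $Y$, $Z$, and $f$ of that proposition in the case $W=\P^m$. The first task is therefore to confirm that the hypotheses of \eqref{claim:Ex-of-blow-up} are satisfied in this situation: $\P^m$ is a smooth projective variety, $p\in V$ is a closed point, and $H\subset\P^m$ is chosen so that $\{p\}\times H\subset\{p\}\times\P^m$ is a hyperplane section of $\{p\}\times\P^m$ in $\P^N$, exactly as the set-up of Example~\eqref{ex:meta} demands.

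Granting this, one invokes Proposition~\eqref{claim:Ex-of-blow-up} verbatim to conclude that $f_{\P}$ is an isomorphism over $Y_{\P}\setminus\{v\}$ and that the scheme-theoretic preimage $f_{\P}^{-1}v$ is isomorphic to $V$. In fact this is exactly the intermediate case handled inside the proof of \eqref{claim:Ex-of-blow-up}, namely the paragraph beginning ``Next consider the case when $V\subseteq\P^n$ is arbitrary, but $W=\P^m$''; there the defining equations of $V$ cut $f^{-1}v$ out of the exceptional $\P^n$ while the $z_{im}$ monomial computation is unaffected, yielding $f^{-1}v\simeq V$. Hence no new computation is required beyond citing that middle step.

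There is essentially no obstacle, since this is a corollary in the strict sense: it is a logically weaker instance of an already established result. The only point deserving a word of care is the bookkeeping that matches the $\P$-subscripted objects to those produced by Example~\eqref{ex:meta} and Proposition~\eqref{claim:Ex-of-blow-up} upon setting $W=\P^m$; once this identification is made explicit, the statement is immediate.
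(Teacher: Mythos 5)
Your proposal is correct and coincides with the paper's own argument: the authors likewise justify the corollary by noting it was proven as an intermediate step in the proof of \eqref{claim:Ex-of-blow-up} and also follows directly from that proposition by taking $W=\P^m$. No further comment is needed.
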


\begin{proof}
  This was proven as an intermediate step in, and is also straightforward from
  \eqref{claim:Ex-of-blow-up} by taking $W=\P^m$.
\end{proof}

\begin{prop}
  \label{claim:CM}
  Assume that $V$ and $W$ are both positive dimensional, $W\subseteq \P^m$ is a
  complete intersection, and the embedding $V\times \P^r\subset \P^N$ for any linear
  subvariety $\P^r\subseteq \P^m$ induced by the Segre embedding of $\P^n\times \P^m$
  is projectively normal.  Then $X$ is Gorenstein.
\end{prop}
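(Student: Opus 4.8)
The plan is to exhibit $X$ as a codimension-$c$ complete intersection inside a smooth variety, where $c=\operatorname{codim}_{\P^m}W$, and then to conclude by the standard fact that a local complete intersection in a regular scheme is Gorenstein. The ambient smooth variety will be $X_\P$, and the cutting equations will be those defining the complete intersection $W\subseteq\P^m$.

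First I would treat the model case $W=\P^m$ and show that $X_\P$ is smooth. Blowing up $I(Z_\P)=(z_{0m},\dots,z_{nm})$, on the chart of $X_\P$ lying over $\{x_\alpha\neq0\}$ one introduces the coordinates $t_i=x_i/x_\alpha$ (subject only to the dehomogenized equations of $V$, hence a smooth chart of $V$, as $V$ is smooth) together with the fiber coordinates $z_{\alpha0},\dots,z_{\alpha m}$; the relation $z_{ij}=t_i\,z_{\alpha j}$ shows that this chart is an open subset of $V$ times $\A^{m+1}$. Globally this identifies $X_\P$ with the total space $\mathrm{Tot}\big(\mathcal O_V(-1)^{\oplus(m+1)}\big)$ of a vector bundle over $V$, where $\mathcal O_V(1):=\mathcal O_{\P^n}(1)\vert_V$; in particular $X_\P$ is smooth, and its $m+1$ tautological fiber-linear functions give sections $y_0,\dots,y_m\in H^0\big(X_\P,\rho^*\mathcal O_V(1)\big)$, with $\rho\colon X_\P\to V$ the projection. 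Here the projective normality of $V\times\P^m$ and of the linear section $V\times\P^{m-1}$ is what guarantees that $Y_\P$ is normal and that $I(Z_\P)$ is generated by the $z_{im}$, so that the Rees algebra and this chart computation are valid.

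Next I would cut down by the equations of $W$. Write $W=V(g_1,\dots,g_c)\subseteq\P^m$ with $g_1,\dots,g_c$ a homogeneous regular sequence, $\deg g_i=d_i$. Since $Z=Z_\P\cap Y$ and, by the projective normality of the relevant linear sections $V\times\P^r$, the ideal of $Z$ in $Y$ is the restriction $I(Z_\P)\cdot\mathcal O_Y$, the universal property of blowing up gives that $X=\mathrm{Bl}_ZY$ is the strict transform $(f_\P)^{-1}_*Y$ of $Y$ under $f_\P$. On $V\times\P^m$, under $z_{\alpha\gamma}=x_\alpha y_\gamma$, one has $g_{i,\alpha}=x_\alpha^{d_i}\,g_i(y_0,\dots,y_m)$, and in the chart above the factor $x_\alpha^{d_i}$ is a unit (as $t_\alpha=1$), so the total transform of the ideal $\big(g_{i,\alpha}\big)$ of $Y$ is generated by $g_1(y),\dots,g_c(y)$ alone; since these cut out a subscheme of pure codimension $c$ containing no component of the exceptional divisor $\{y_m=0\}$ (recall $y_m\notin I(W)$ by the coordinate choice), the total transform already equals the strict transform. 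Therefore
$$
X=(f_\P)^{-1}_*Y=V\big(g_1(y),\dots,g_c(y)\big)\subseteq X_\P,
$$
the common zero locus of the $c$ sections $g_i(y)\in H^0\big(X_\P,\rho^*\mathcal O_V(d_i)\big)$.

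Finally, a dimension count gives $\dim X=\dim V+\dim W+1=\dim X_\P-c$, so $X$ is cut out in the smooth variety $X_\P$ by exactly $c=\operatorname{codim}(X,X_\P)$ equations; hence these form a regular sequence, $X$ is a local complete intersection in a smooth variety, and therefore $X$ is Gorenstein. I expect the main obstacle to be the identification in the second step — proving scheme-theoretically that $\mathrm{Bl}_ZY$ coincides with the complete intersection $V(g_1(y),\dots,g_c(y))$ in $X_\P$, equivalently that $I(Z_\P)\cdot\mathcal O_Y=I_Z$ so that forming the blow-up commutes with restriction along $Y\hookrightarrow Y_\P$. This is precisely where the projective-normality hypotheses for the linear sections $V\times\P^r$ enter, controlling the ideals $I_Z$, $I(Z_\P)$ and their powers and ensuring that no extra components or embedded points appear.
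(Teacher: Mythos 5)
Your argument is correct in substance, but it takes a genuinely different route from the paper. The paper never shows that $X_{\P}$ is smooth for general $m$; instead it works entirely with divisors: since $Y_{\P}$ is normal and $f_{\P}$ is small, the strict transforms of the linearly equivalent Weil divisors $C(V\times H')\sim d\cdot C(V\times H)$ remain linearly equivalent, the preimage of the center is Cartier, hence so is $f^{-1}_*C(V\times H')$, and the Gorenstein property ascends and descends along Cartier divisors. Iterating over the hypersurfaces cutting out $W$ reduces to $W=\P^m$, and then a descending induction on $m$ reduces to $m=1$, where $X_{\P}$ is the blow-up of $C(V)$ at its vertex and hence smooth. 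You instead identify $X_{\P}$ globally as the total space of $\sO_V(-1)^{\oplus(m+1)}$ (the standard small resolution of the rank-one locus restricted to $V$), which is smooth, and exhibit $X$ as the zero scheme of the $c$ sections $g_i(y)$, i.e., a local complete intersection in a smooth variety. Your version proves somewhat more (smoothness of $X_{\P}$ for all $m$, and that $X$ is lci, not just Gorenstein) and replaces the divisor-class bookkeeping with an explicit geometric model; the paper's version avoids having to justify the global vector-bundle description and the scheme-theoretic equality of total and strict transforms. Both arguments ultimately rest on the same delicate commutative-algebra input, which you correctly isolate: that $I(Z_{\P})$, $I_Z\subseteq\sO_Y$ and $I(Y)\subseteq\sO_{Y_{\P}}$ are generated by the expected elements (so that the Rees-algebra chart computation is valid and $\operatorname{Bl}_ZY$ is the strict transform of $Y$); the paper asserts these in the proof of the blow-up proposition and channels projective normality through the normality of $Y_{\P}$, so you are no worse off than the source on this point. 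One small imprecision: the extra components of the total transform that you must rule out live in $f_{\P}^{-1}(v)$ (codimension $m+1>c$), not along all of $\{y_m=0\}$ (which has codimension $1$ and is certainly not avoided by a codimension-$c$ subscheme when $c\geq 1$); the correct statement is that $V(g_1(y),\dots,g_c(y))$ is set-theoretically $\widetilde Y\cup f_{\P}^{-1}(v)$ and Krull's height theorem forbids a component of codimension $m+1$, after which Cohen--Macaulayness and generic reducedness give the scheme-theoretic equality with $\widetilde Y$.
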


\begin{proof}
  First note that the projective normality assumption implies that
  $Y_{\P}=C(V\times\P^m)$ is normal and hence we may consider divisors and their
  linear equivalence on it.

  Let $H'\subset \P^m$ be an arbitrary hypersurface (different from $H$ and not
  necessarily linear). Observe that $H'\sim d\cdot H$ with $d=\deg H'$, so $V\times
  H'\sim d\cdot (V\times H)$, and hence $C(V\times H')\sim d\cdot C(V\times H)$ as
  divisors on $Y_{\P}$.

  Since $f_{\P}$ is a small morphism it follows that the strict transforms of these
  divisors on $X_{\P}$ are also linearly equivalent: $f^{-1}_*C(V\times H')\sim
  d\cdot f^{-1}_*C(V\times H)$ (where by abuse of notation we let $f=f_{\P}$).  By the basic properties of blowing up, the (scheme-theoretic) pre-image of $C(V\times H)$ is a Cartier divisor on $X$ which coincides with $f^{-1}_*C(V\times H)$ (as $f$ is small).  However, then $f^{-1}_*C(V\times H')$
  is also a Cartier divisor and hence it is Gorenstein if and only if $X_{\P}$ is.
  Note that $f^{-1}_*C(V\times H')$ is nothing else but the blow up of $C(V\times
  H')$ along $C(V\times (H'\cap H))$.

  By assumption $W$ is a complete intersection, so applying the above argument for
  the intersection of the hypersurfaces cutting out $W$ shows that $X$ is Gorenstein
  if and only if $X_{\P}$ is Gorenstein. In other words, it is enough to prove the
  statement with the additional assumption that $W=\P^m$. In particular, we have
  $X=X_{\P}$, etc.
  
  In this case the same argument as above shows that the statement holds for $m$ if
  and only if it holds for $m-1$, so we only need to prove it for $m=1$.  In that
  case $H\in \P^1$ is a single point. Choose another point $H'\in \P^1$. As above,
  $f^{-1}_*C(V\times H')$ is a Cartier divisor in $X$ and it is the blow up of
  $C(V\times H')$ along the intersection $C(V\times H')\cap C(V\times H)$.

  We claim that this intersection is just the vertex of $C(V)$.
   
  To see this, view $Y=Y_{\P}=C(V\times \P^1)$ as a subscheme of $C(\P^n\times
  \P^1)$.  Inside $C(\P^n\times \P^1)$ the cones $C(\P^n\times H)$ and $C(\P^n\times
  H')$ are just linear subspaces of dimension $n+1$ whose scheme theoretic
  intersection is the single reduced point $v$.  Therefore we have that
  $$
  C(V\times H')\cap C(V\times H)\subseteq C(\P^m\times H')\cap C(\P^m\times H)=\{v\}
  $$
  proving the same for this intersection.

  Finally then $f^{-1}_*C(V\times H')$, the blow up of $C(V\times H')$ along the
  intersection $C(V\times H')\cap C(V\times H)$ is just the blow up of $C(V)$ at its
  vertex and hence it is smooth and in particular Gorenstein. This completes the
  proof.
\end{proof}

\begin{lem}\label{lem:proj-normal}
  Let $V\subseteq \P^n$ and $W\subseteq \P^m$ be two normal complete intersection
  varieties of positive dimension.  Assume that either $\dim V+\dim W>2$ or if $\dim
  V=\dim W=1$, then $n=m=2$. Then the embedding $V\times W\subset \P^N$ induced by
  the Segre embedding of $\P^n\times \P^m$ is projectively normal.
\end{lem}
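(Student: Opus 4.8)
The plan is to deduce projective normality of $V\times W$ from that of the two factors. Recall that for a closed subvariety $Z\subseteq\P^M$ the homogeneous coordinate ring is integrally closed (i.e.\ $Z$ is projectively normal) if and only if $Z$ is normal and the restriction maps $H^0(\P^M,\mathcal{O}_{\P^M}(d))\to H^0(Z,\mathcal{O}_Z(d))$ are surjective for every $d\ge 0$. Applying this with $Z=V\times W$ and $M=N$, the first task is the normality of $V\times W$, which holds because a product of normal varieties over an algebraically closed field is again normal. It therefore remains to prove the surjectivity statement.

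For the surjectivity I would factor the restriction map through $\P^n\times\P^m$. Since $\mathcal{O}_{\P^N}(1)$ pulls back to $\mathcal{O}_{\P^n\times\P^m}(1,1)$ under the Segre embedding, the map in degree $d$ factors as
\[
H^0(\P^N,\mathcal{O}(d))\longrightarrow H^0(\P^n\times\P^m,\mathcal{O}(d,d))\longrightarrow H^0(V\times W,\mathcal{O}(d,d)).
\]
The first arrow is surjective because the Segre embedding of $\P^n\times\P^m$ is itself projectively normal: a degree-$d$ monomial in the $z_{ij}$ can be chosen to map onto any prescribed bihomogeneous monomial of bidegree $(d,d)$. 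For the second arrow, the Künneth formula identifies it with the tensor product
\[
\big(H^0(\P^n,\mathcal{O}(d))\to H^0(V,\mathcal{O}_V(d))\big)\otimes\big(H^0(\P^m,\mathcal{O}(d))\to H^0(W,\mathcal{O}_W(d))\big),
\]
and a tensor product of two surjections is surjective. Thus the surjectivity for $V\times W$ reduces to the surjectivity of the restriction maps for $V\subseteq\P^n$ and for $W\subseteq\P^m$ separately, i.e.\ to the projective normality of each factor.

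Finally I would establish projective normality of the factors. Each of $V$ and $W$ is normal by hypothesis, so it suffices to prove the vanishing $H^1(\P^n,\mathcal{I}_V(d))=0$ for all $d$ (and similarly for $W$), which via the ideal sequence gives the desired surjectivity. Since $V$ is a complete intersection, its homogeneous coordinate ring is the quotient of a polynomial ring by a regular sequence, hence Cohen--Macaulay, so $V$ is arithmetically Cohen--Macaulay; as $\dim V\ge 1$ this forces $H^1(\mathcal{I}_V(d))=0$ for every $d$. I expect the delicate point to be exactly this positive-dimensionality input, which the hypotheses are arranged to supply: when $\dim V+\dim W>2$ both factors are comfortably positive-dimensional, while in the remaining curve case the assumption $n=m=2$ makes each factor a single hypersurface, so the required vanishing follows at once from the Koszul sequence of one equation. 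As an alternative that stays closer to the style of \eqref{claim:CM}, one can avoid invoking the Cohen--Macaulayness of complete intersections and instead induct by peeling off the defining hypersurfaces one at a time, reducing to the base case $V=\P^n$, $W=\P^m$; there the hypotheses are precisely what keep every intermediate factor positive-dimensional throughout the induction, the plane-curve case being the tightest.
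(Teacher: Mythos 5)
Your proof is correct, but it takes a genuinely different route from the paper's. Both arguments reduce to the surjectivity of $H^0(\P^n\times \P^m,\sO_{\P^N}(d)|_{\P^n\times\P^m})\to H^0(V\times W,\sO_{\P^N}(d)|_{V\times W})$, but the paper then inducts on the total number of hypersurfaces cutting out $V$ and $W$, peeling them off one at a time: writing $V=V'\cap H'$ with $\deg H'=e$, the obstruction to surjectivity of $H^0(V'\times W)\to H^0(V\times W)$ is $H^1\bigl(V'\times W,\pi_1^*\sO_{V'}(d-e)\otimes\pi_2^*\sO_W(d)\bigr)$, and killing the mixed K\"unneth term $H^0(V',\sO_{V'}(d-e))\otimes H^1(W,\sO_W(d))$ in the case of two curves is exactly where the plane-curve hypothesis and an ordering of the degrees enter. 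You instead apply K\"unneth in degree $0$ to identify the restriction map with the tensor product of the two individual restriction maps, so the problem decouples into projective normality of each factor separately, which follows from the arithmetic Cohen--Macaulayness of a complete intersection (equivalently, $H^1(\P^n,\sI_V(d))=0$ from the Koszul resolution, valid as soon as $\dim V\geq 1$). This is cleaner and proves strictly more: your argument needs neither $\dim V+\dim W>2$ nor the condition $n=m=2$ in the curve case, only that each factor is a positive-dimensional normal complete intersection. Consequently your closing speculation about the role of those hypotheses is off the mark --- they are an artifact of the paper's particular induction, not an intrinsic requirement --- but this does not affect the correctness of your argument. Your explicit remark that $V\times W$ is normal (a product of normal varieties over an algebraically closed field is normal), needed to pass from surjectivity of the restriction maps to integral closedness of the homogeneous coordinate ring, is a point the paper leaves implicit.
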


\begin{proof}
  It follows easily from the definition of the Segre embedding, that it is itself
  projectively normal and hence it is enough to prove that
  \begin{equation}
    \label{eq:1}
    H^0(\P^n\times \P^m, \sO_{\P^N}(d)|_{\P^n\times \P^m})\to   H^0(V\times W,
    {\sO_{\P^N}(d)}|_{V\times W} )
  \end{equation}
  is surjective for all $d\in\N$.

  We prove this by induction on the combined number of hypersurfaces cutting out $V$
  and $W$. When this number is $0$, then $V=\P^n$ and $W=\P^m$ so we are done.

  Otherwise, assume that $\dim V\leq \dim W$ and if $\dim V=\dim W=1$ then $\deg
  V=e\geq \deg W$.  Let $V'\subseteq \P^n$ be a complete intersection variety of
  dimension $\dim V+1$ such that $V= V'\cap H'$ where $H'\subset \P^n$ is a
  hypersurface of degree $e$. Then $V\times W\subset V'\times W$ is a Cartier divisor
  with ideal sheaf $\sI\simeq\pi_1^*\sO_{V'}(-e)$ where $\pi_1:V'\times W\to V'$ is the
  projection to the first factor.  It follows that for every $d\in\N$ there exists a
  short exact sequence,
  \begin{equation*}
    0\to {\sO_{\P^N}(d)}|_{V'\times W}\otimes \pi_1^*\sO_{V'}(-e) \to
    {\sO_{\P^N}(d)}|_{V'\times W} \to     {\sO_{\P^N}(d)}|_{V\times W} \to 0,
  \end{equation*}
  and hence an induced exact sequence of cohomology
  \begin{multline*}
    H^0(V'\times W, {\sO_{\P^N}(d)}|_{V'\times W} ) \to
    H^0(V\times W, {\sO_{\P^N}(d)}|_{V\times W} ) \to \\ \to
    H^1(V'\times W, \pi_1^*\sO_{V'}(d-e)\otimes \pi_2^*\sO_{W}(d) ),
  \end{multline*}
  where $\pi_2:V'\times W\to W$ is the projection to the second factor. 

  Since by assumption $V'$ is a complete intersection variety of dimension at least
  $2$, it follows that $H^1(V', \sO_{V'}(d-e))=0$. 

  If $\dim W>1$, then it follows similarly that $H^1(W,\sO_{W}(d))=0$.

  If $\dim W=1$, then since $0<\dim V\leq \dim W$ we also have $\dim V=1$. By
  assumption $V$ and $W$ are normal and hence regular, and in this case we assumed
  earlier that $\deg V=e\geq \deg W$. It follows that as long as $e>d$, then $H^0(V',
  \sO_{V'}(d-e))=0$ and if $e\leq d$, then $d\geq \deg W$ and hence
  $H^1(W,\sO_{W}(d))=0$.
  
  In both cases we obtain that by the K\"unneth formula (cf.\ \cite[(6.7.8)]{EGAIII},
  \cite[9.2.4]{MR1252397}),
  $$
  H^1(V'\times W, \pi_1^*\sO_{V'}(d-e)\otimes \pi_2^*\sO_{W}(d) )=0,
  $$
  and hence 
  $$
  H^0(V'\times W, {\sO_{\P^N}(d)}|_{V'\times W} ) \to H^0(V\times W,
  {\sO_{\P^N}(d)}|_{V\times W} )
  $$
  is surjective. By induction we may assume that 
  $$
  H^0(\P^n\times \P^m, \sO_{\P^N}(d)|_{\P^n\times \P^m})\to H^0(V'\times W,
  {\sO_{\P^N}(d)}|_{V'\times W} )
  $$
  is surjective, so it follows that the desired map in (\ref{eq:1}) is surjective as
  well and the statement is proven.
\end{proof}

\begin{cor}
  \label{cor:Gorenstein}
  Assume that $V\subseteq \P^n$ and $W\subseteq \P^m$ are two positive dimensional
  normal complete intersection varieties.  Then $X$ is Gorenstein.
\end{cor}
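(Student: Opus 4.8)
The plan is to deduce the corollary from Proposition~\eqref{claim:CM}. Two of its hypotheses hold by assumption: $V$ and $W$ are positive dimensional and $W\subseteq\P^m$ is a complete intersection. Hence the only point to check is that the embedding $V\times\P^r\subset\P^N$ is projectively normal for every linear subvariety $\P^r\subseteq\P^m$, i.e.\ for every $1\leq r\leq m$. Each such $\P^r$ is a linear, hence smooth (so normal) and complete intersection, subvariety of positive dimension, so it is an admissible choice for the second factor in Lemma~\eqref{lem:proj-normal}.

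For most values of $r$ I would simply invoke Lemma~\eqref{lem:proj-normal} for the pair $(V,\P^r)$: its numerical hypothesis $\dim V+r>2$ is automatic as soon as $\dim V\geq 2$ or $r\geq 2$, and in all of these cases the lemma gives that $V\times\P^r$ is projectively normal.

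The one remaining case, which I expect to be the only real obstacle, is $\dim V=r=1$, that is, $V$ a curve and the second factor the line $\P^1$; here Lemma~\eqref{lem:proj-normal} would in addition demand $n=2$, which we cannot assume. I would treat this case by hand, following the mechanism of the proof of Lemma~\eqref{lem:proj-normal} but exploiting that the second factor is $\P^1$. Write $V=V'\cap H'$ with $V'\subseteq\P^n$ a complete intersection surface and $H'$ a hypersurface of degree $e$. Then $V'\times\P^1$ is projectively normal by Lemma~\eqref{lem:proj-normal}, since $\dim V'+1=3>2$, and the restriction map $H^0(V'\times\P^1,\sO_{\P^N}(d))\to H^0(V\times\P^1,\sO_{\P^N}(d))$ is surjective because the obstruction
$$
H^1\bigl(V'\times\P^1,\ \pi_1^*\sO_{V'}(d-e)\otimes\pi_2^*\sO_{\P^1}(d)\bigr)
$$
vanishes: by the K\"unneth formula the term involving $H^1(V',\sO_{V'}(d-e))$ dies because a complete intersection of dimension $\geq 2$ has no intermediate cohomology, and the term involving $H^1(\P^1,\sO_{\P^1}(d))$ dies because $H^1(\P^1,\sO_{\P^1}(d))=0$ for $d\geq 0$ (for $d<0$ there are no sections to lift). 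This is precisely where the line $\P^1$ renders the degree bookkeeping of the lemma's edge case unnecessary. Composing with the surjection $H^0(\P^n\times\P^1,\sO_{\P^N}(d))\to H^0(V'\times\P^1,\sO_{\P^N}(d))$ then shows that $V\times\P^1$ is projectively normal as well.

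With the projective normality of $V\times\P^r$ established for every linear $\P^r\subseteq\P^m$, all the hypotheses of Proposition~\eqref{claim:CM} are in force, and I would conclude that $X$ is Gorenstein.
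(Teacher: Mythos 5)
Your proof is correct and follows the same route as the paper: both deduce the corollary by feeding \eqref{lem:proj-normal} into \eqref{claim:CM}, checking projective normality only for the products $V\times\P^r$ with $\P^r\subseteq\P^m$ linear. The extra case analysis you supply is in fact a genuine refinement of the paper's one-line proof, which silently applies \eqref{lem:proj-normal} to the pair $(V,\P^r)$ even when $\dim V=r=1$ and $n>2$ --- a case excluded by that lemma's stated hypotheses --- and your direct K\"unneth argument for that case (using that $H^1(\P^1,\sO_{\P^1}(d))=0$ for all $d\geq 0$, so the degree comparison in the lemma's edge case is unnecessary) correctly closes this small gap.
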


\begin{proof}
  Follows by combining \eqref{claim:CM} and \eqref{lem:proj-normal}. Note that in
  \eqref{claim:CM} the embedding $V\times W\hookrightarrow \P^N$ does not need to be
  projectively normal, only $V\times \P^r\hookrightarrow \P^N$ does, which indeed
  follows from \eqref{lem:proj-normal}.
\end{proof}

\begin{ex}\label{e-1} 
  Let $k$ be an algebraically closed field.  We will construct a birational
  projective morphism $f:X\to Y$ such that $X$ is Gorenstein (and log canonical) and
  $R^1f_*\omega _X \ne 0$.

  Let $E_1,E_2\subseteq \P^2$ be two smooth projective cubic curves. Consider the
  construction in \eqref{ex:meta} with $V=E_1$, $W=E_2$.  As in that construction let
  $f:X\to Y$ be the blow up of $Y=C(E_1\times E_2)$ along $Z=C(E_1\times H)$ where
  $H\subseteq E_2$ is a hyperplane section.  The common vertex of $Y$ and $Z$ will
  still be denoted by $v\in Z\subset Y$.  The map $f$ is an isomorphism over
  $Y\setminus\{v\}$ and $f^{-1}v\simeq E_1$ by \eqref{claim:Ex-of-blow-up}.

  \begin{prop}
    Both $X$ and $Y$ are smooth in codimension $1$ with trivial canonical divisor and
    $X$ is Gorenstein and hence Cohen-Macaulay.
  \end{prop}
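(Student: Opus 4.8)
The plan is to establish the three assertions in turn, first for the cone $Y$ and then transporting the conclusions to $X$ along the small birational morphism $f$, which is an isomorphism away from the codimension-two locus $f^{-1}(v)\simeq E_1$.

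I would begin by locating the singular loci. As $E_1\times E_2$ is a smooth surface, the punctured cone $Y\setminus\{v\}$ is smooth, so $\operatorname{Sing}Y\subseteq\{v\}$, a point of codimension three in the threefold $Y$; and since $f$ is an isomorphism over $Y\setminus\{v\}$ while $f^{-1}(v)\simeq E_1$ is a curve, $\operatorname{Sing}X\subseteq f^{-1}(v)$ has codimension two. Thus both $X$ and $Y$ are smooth in codimension one ($R_1$). For the Gorenstein assertion I would simply invoke \eqref{cor:Gorenstein} with $V=E_1$ and $W=E_2$: each is a smooth plane cubic, hence a positive-dimensional normal complete intersection in $\P^2$, so $X$ is Gorenstein, and in particular Cohen--Macaulay. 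Note that $R_1$ together with $S_2$ (from Cohen--Macaulayness) shows via Serre's criterion that $X$ is normal, which I will need below.

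The heart of the argument is the triviality of $K_Y$, and here the key input is that each $E_i$ is an elliptic curve, so $\omega_{E_i}\simeq\sO_{E_i}$ and hence $\omega_{E_1\times E_2}\simeq\sO_{E_1\times E_2}$ by the K\"unneth formula. Let $\rho:\operatorname{Tot}(\sO_{E_1\times E_2}(-1))\to E_1\times E_2$ be the total space whose contraction of the zero section recovers $Y$; the punctured cone $U:=Y\setminus\{v\}$ is precisely the complement of that zero section. The total-space formula gives $\omega_{\operatorname{Tot}}\simeq\rho^*\!\big(\omega_{E_1\times E_2}\otimes\sO(1)\big)\simeq\rho^*\sO(1)$, and on $U$ the nowhere-vanishing tautological section trivializes $\rho^*\sO(-1)$, whence $\rho^*\sO(1)|_U\simeq\sO_U$ and $\omega_U\simeq\sO_U$, i.e. $K_U\sim 0$. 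Since $Y$ is normal — its normality coming from the projective normality of $E_1\times E_2\subset\P^N$, which is \eqref{lem:proj-normal} in the case $n=m=2$ — and $\{v\}$ has codimension at least two, the restriction $\operatorname{Cl}(Y)\to\operatorname{Cl}(U)$ is an isomorphism, so $K_Y\sim 0$.

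It then remains to transfer triviality to $X$. Because $f$ is small, the isomorphism $X\setminus f^{-1}(v)\xrightarrow{\ \sim\ }Y\setminus\{v\}=U$ identifies $K_X|_{X\setminus f^{-1}(v)}$ with $K_U\sim 0$; as $X$ is normal and $f^{-1}(v)$ has codimension at least two, the same class-group comparison forces $K_X\sim 0$. I expect Step 3 — computing $K_Y$ — to be the main obstacle, since it is the only non-formal point: one must correctly identify the punctured cone with the complement of the zero section in $\operatorname{Tot}(\sO(-1))$ and get the sign right in the total-space canonical bundle formula, the triviality of $\omega_{E_1\times E_2}$ then doing the real work. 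Everything else is bookkeeping with Serre's criterion and with class groups of normal varieties across codimension-two subsets.
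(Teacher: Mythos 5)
Your proposal is correct and follows essentially the same route as the paper: smoothness in codimension one from the structure of the cone and the fibers of $f$, Gorensteinness via \eqref{cor:Gorenstein}, and triviality of the canonical divisor on the punctured cone (the paper phrases this as $Y\setminus\{v\}$ being a bundle over $E_1\times E_2$ with trivial canonical, which is exactly your $\operatorname{Tot}(\sO(-1))$ computation) followed by extension across the codimension-two locus. Your write-up merely makes explicit the sign in the total-space canonical bundle formula and the normality needed to compare class groups, which the paper leaves implicit.
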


  \begin{proof}
    By construction $Y\setminus\{v\}\simeq X\setminus f^{-1}v$ is smooth, so the
    first statement follows. Furthermore, $Y\setminus\{v\}\simeq X\setminus f^{-1}v$
    is an affine bundle over $E_1\times E_2$, so by the choice of $E_1$ and $E_2$,
    the canonical divisor of $Y\setminus\{v\}\simeq X\setminus f^{-1}v$ is trivial.
    However, the complement of this set has codimension at least $2$ in both $X$ and
    $Y$ and hence their canonical divisors are trivial as well.
    Since $E_1,E_2\subset \P^2$ are hypersurfaces, $X$ is Gorenstein by
    \eqref{cor:Gorenstein}.
  \end{proof}

  Let $E$ denote $f^{-1}v$. So we have that $E\simeq E_1$ and there is a short exact
  sequence
  $$
  0\to \mathscr I_E\to \mathcal O _X \to \mathcal O _{E}\to 0.
  $$
  Pushing this forward via $f$ we obtain a homomorphism $\phi:R^1f_*\mathcal O _X \to
  R^1f_*\mathcal O _{E}$. Since the maximum dimension of any fiber of $f$ is $1$, we
  have $R^2f_*\mathcal I_E=0$.  It follows that $R^1f_* \omega _X=R^1f_*\mathcal
  O_X\ne 0$, because $R^1f_*\mathcal O_{E}\ne 0$ (it is a sheaf supported on $v$ of
  length $h^1(\mathcal O _{E})=1$).
\end{ex}

\begin{ex}\label{e-2} 
  Let $k$ be an algebraically closed field of characteristic $p\ne 0$. Then there
  exists a birational morphism $f:X\to Y$ of varieties (defined over $k$) such that
  $X$ is smooth of dimension $7$ and $R^if_*\omega _X \ne 0$.  for some $i\in \{
  1,2,3,4,5 \}$.

  Let $Z$ be a smooth $6$-dimensional variety and $L$ a very ample line bundle such
  that $H^1(Z,\omega _Z\otimes L)\ne 0$.
  % for some $i\in \{ 5,6 \}$ 
  (such varieties exist by \cite{LR97}).  By Serre vanishing $H^i(Z,\omega _Z\otimes
  L^j)=0$ for all $i>0$ and $j\gg 0$. Let $m$ be the largest positive integer such
  that $H^i(Z,\omega _Z\otimes L^m)\ne 0$ for some $i>0$.

  After replacing $L$ by $L^m$ we may assume that there exists a $q>0$ such that
  $H^q(Z,\omega _Z\otimes L)\neq 0$, but $H^i(Z,\omega _Z\otimes L^j)= 0$ for all
  $i>0$ and $j\geq 2$.  Note that $q<6$, because $H^6(Z,\omega _Z\otimes L)$ is dual
  to $H^0(Z,L^{-1})=0$.

  Let $Y$ be the cone over the embedding of $Z$ given by $L$, $f:X\to Y$ the blow up
  of the vertex $v\in Y$, and $E=f^{-1}v$ the exceptional divisor of $f$.  Note that
  $E\simeq Z$ and $\omega _E(-jE)\simeq \omega _Z\otimes L^j$ for any $j$.
  %% Furthermore, as the fibers have dimension $\leq 6$ we obtain that $R^if_*\omega
  %% _X(-E)=0$ for $i> 6$.

For $j\geq 1$ consider the short exact sequence
  $$
  0\to \omega _X(-jE)\to \omega_X(-(j-1)E)\to \omega _E(-jE)\to 0.
  $$
  \begin{subclaim}
    $R^if_*\omega _X(-E)=0$ for all $i>0$ and $R^if_*\omega _X=0$ for all $i>0$,
    such that $H^i(Z,\omega _Z\otimes L)= 0$.
  \end{subclaim}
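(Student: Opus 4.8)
The plan is to apply $\mathbf R f_*$ to the displayed short exact sequences and run a descending induction on $j$. First I would record two structural reductions. Since $Y$ is the affine cone it is an affine scheme, $f$ is an isomorphism over $Y\setminus\{v\}$, and $f^{-1}v=E$; hence for every coherent sheaf $\mathcal G$ on $X$ and every $i>0$ the sheaf $R^if_*\mathcal G$ is supported at the single point $v$, and (because $Y$ is affine) it is nonzero precisely when $H^i(X,\mathcal G)\neq 0$. Second, the restriction $f|_E\colon E\to Y$ factors through the inclusion $\{v\}\hookrightarrow Y$, so $R^if_*\omega_E(-jE)$ is the skyscraper at $v$ with stalk $H^i(E,\omega_E(-jE))=H^i(Z,\omega_Z\otimes L^j)$, using $E\simeq Z$ and the identity $\omega_E(-jE)\simeq\omega_Z\otimes L^j$ recorded above. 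In particular the hypothesis gives $R^if_*\omega_E(-jE)=0$ for all $i>0$ and all $j\geq 2$.

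For the first assertion I would start the induction with relative Serre vanishing: since $\mathcal O_X(-E)=\mathcal O_X(1)$ is $f$-ample for the blow-up $f$, one has $R^if_*\omega_X(-jE)=0$ for all $i>0$ once $j\gg 0$. The long exact sequence obtained by applying $\mathbf R f_*$ to
\[
0\to \omega_X(-jE)\to \omega_X(-(j-1)E)\to \omega_E(-jE)\to 0
\]
contains, for each $i>0$, the segment $R^if_*\omega_X(-jE)\to R^if_*\omega_X(-(j-1)E)\to R^if_*\omega_E(-jE)$. For $j\geq 2$ the right-hand term vanishes, so if $R^if_*\omega_X(-jE)=0$ for all $i>0$ then the same holds for $\omega_X(-(j-1)E)$. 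Descending from $j\gg 0$ down to the step $j=2$ yields $R^if_*\omega_X(-E)=0$ for all $i>0$.

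For the second assertion I would use the same sequence with $j=1$, where $\omega_E(-E)\simeq\omega_Z\otimes L$, giving for $i>0$
\[
R^if_*\omega_X(-E)\to R^if_*\omega_X\to R^if_*\omega_E(-E)\to R^{i+1}f_*\omega_X(-E).
\]
By the first assertion both outer terms vanish, so $R^if_*\omega_X\simeq R^if_*\omega_E(-E)$, the skyscraper with stalk $H^i(Z,\omega_Z\otimes L)$; hence $R^if_*\omega_X=0$ exactly when $H^i(Z,\omega_Z\otimes L)=0$, which is the claim. The only real subtlety I anticipate is the bookkeeping that makes the induction bottom out correctly: one must use $j\geq 2$ to annihilate $R^if_*\omega_E(-jE)$ and treat $j=1$ separately, since that last step is precisely where the distinguished group $H^i(Z,\omega_Z\otimes L)$ is allowed to survive and is transmitted to $R^if_*\omega_X$.
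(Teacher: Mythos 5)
Your proposal is correct and follows essentially the same argument as the paper: descending induction from relative Serre vanishing for $j\gg 0$, using the short exact sequences and the vanishing $H^i(Z,\omega_Z\otimes L^j)=0$ for $j\geq 2$ to reach $j=1$, then handling $j=1$ separately. Your version is marginally more explicit in extracting the isomorphism $R^if_*\omega_X\simeq R^if_*\omega_E(-E)$ at the last step, but this is the same route.
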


  \begin{proof}[Proof of Claim]
    As $-E$ is $f$-ample we have, by Serre vanishing again, that $R^if_*\omega
    _X(-jE)=0$ for all $i>0$ and some $j>0$. If either $j>1$ or $j=1$ and
    $H^i(Z,\omega _Z\otimes L)= 0$, then $R^if_*\omega _E(-jE)=H^i(Z,\omega_Z\otimes
    L^j)=0$ by the choice of $L$.  Therefore, the exact sequence
    $$
    0= R^if_*\omega_X(-jE)\to R^if_*\omega _X(-(j-1)E)\to R^if_*\omega _E(-jE)=0
    $$
    gives that $R^if_*\omega _X(-(j-1)E)=0$. The claim follows  by induction.
  \end{proof}

  From the above claim it follows that 
  $$
  0= R^qf_*\omega _X(-E) \to R^qf_*\omega_X\to R^qf_*\omega _E(-E)\to
  R^{q+1}f_*\omega _X(-E)=0
  $$
  Since $R^qf_*\omega _E(-E)=H^q(Z,\omega_Z\otimes L)\neq 0$, we obtain that
  $R^qf_*\omega_X\neq 0$ as claimed.
  \qed
\end{ex}
\begin{rem}\label{ex:3.5}
  The above example is certainly well known (see for example \cite[4.7.2]{CR11b}) and
  one can easily construct examples in dimension $\geq 3$ (using for example the
  results of \cite{Raynaud78} and \cite{Mukai79}).  We have chosen to include the
  above example because of its elementary nature.
\end{rem}

\begin{comment}
  For our next construction we need the following trivial generalization of a special
  case of \cite[Theorem 2]{CR11}:

\begin{thm}[\protect{cf.\ \cite[Theorem 2]{CR11}}]\label{thm:gr-vanishing}
  Let $k$ be an algebraically closed field and $\nu: T\to U$ a projective generically
  finite morphism such that both $T$ and $U$ are smooth over $k$. Then
  $R\nu_*\omega_T\simeq \omega_U$.
\end{thm}

\begin{proof}
  Consider the Stein factorization of $\nu$, i.e., $\nu=\sigma\circ\tau$ such that
  $\tau: T\to S$ is a projective birational morphism 
\end{proof}

\end{comment}

\begin{prop}\label{p-1} 
  There exists a variety $T$ and a generically finite projective separable morphism
  to an abelian variety $\lambda :T\to A$ defined over an algebraically closed field
  $k$ such that:
  \begin{itemize}
  \item If $\charact k=0$, then $T$ is Gorenstein (and hence %in particular
    Cohen-Macaulay) with a single isolated log canonical singularity, and
    $R^1\lambda_*\omega _T\ne 0$, and
  \item If $\charact k=p>0$, then $T$ is smooth, and $R^i\lambda_*\omega _T\ne 0$ for
    some $i>0$.
  \end{itemize}
\end{prop}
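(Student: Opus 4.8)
The plan is to globalize the local examples \eqref{e-1} and \eqref{e-2} by placing their non-vanishing inside a generically finite morphism to an abelian variety. Recall that those examples produce a projective birational morphism $f\colon \widehat X\to \widehat Y$, with $\widehat Y$ a cone over a base $B$ ($B=E_1\times E_2$ in \eqref{e-1}, $B=Z$ in \eqref{e-2}), such that $R^if_*\omega_{\widehat X}\neq 0$ for some $i>0$ with the offending sheaf supported set-theoretically on the vertex. Moreover $\widehat X$ is Gorenstein log canonical with $\omega_{\widehat X}\cong\mathcal O_{\widehat X}$ when $\charact k=0$ (so that $R^if_*\omega_{\widehat X}=R^if_*\mathcal O_{\widehat X}$ and the non-vanishing is $h^1(\mathcal O_{E_1})=1$), while $\widehat X$ is smooth when $\charact k=p>0$ (where the non-vanishing is a failure of Grauert--Riemenschneider fed by $H^q(Z,\omega_Z\otimes L)\neq 0$). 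I want to reproduce this picture over a single point of an abelian variety.

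First I would realize the cone singularity on a \emph{projective} variety mapping to an abelian variety. A cone $C(B)$ admits a finite Noether normalization $C(B)\to\A^{d}$ with $d=\dim C(B)$, and $\A^{d}$ is a formal (\'etale) model of a point $a_0$ on an abelian variety $A$ with $\dim A=d$. The key step is therefore to produce a projective variety $Y$ and a finite surjective separable morphism $\psi\colon Y\to A$ such that $Y$ has a single isolated singular point over $a_0$ whose formal type is that of the vertex of $C(B)$, and such that $\psi$ near that point is the Noether normalization above. I would build $\psi$ as a suitable finite (generically \'etale) cover of $A$ whose prescribed local behaviour at one point is glued in from the cone; since $\pi_1(A\setminus\{a_0\})=\pi_1(A)$ is abelian, the monodromy of the cover around $a_0$ is strongly constrained, and matching it to that of $C(B)\to\A^d$ is what makes this step delicate but possible. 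By \eqref{cor:Gorenstein} the resulting $Y$ is Gorenstein with an isolated log canonical singularity, and the cone structure gives $\omega_Y\cong\mathcal O_Y$.

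Granting $\psi$, the computation is formal. In characteristic $p$ I let $T\to Y$ be the blow-up of the singular point reproducing $f$, so that $T$ is smooth, and set $\lambda:=\psi\circ(T\to Y)$; since $\psi$ is finite the Leray spectral sequence degenerates to $R^i\lambda_*\omega_T\simeq\psi_*R^i(T\to Y)_*\omega_T$, and as the inner sheaf is the nonzero sheaf over $a_0$ coming from \eqref{e-2} and $\psi_*$ is faithful on nonzero sheaves, I get $R^i\lambda_*\omega_T\neq 0$. As $T\to Y$ is an isomorphism away from $a_0$ and $\psi$ is finite, $\lambda$ is generically finite onto its image, projective and separable. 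In characteristic $0$ the analogous resolution would be singular and Grauert--Riemenschneider must be evaded differently: here I keep $T=Y$, use $\omega_T\cong\mathcal O_T$, and arrange $\lambda$ to be generically finite but to contract a single elliptic curve $E\subseteq T$ to $a_0$; then by cohomology and base change the stalk $(R^1\lambda_*\omega_T)_{a_0}=(R^1\lambda_*\mathcal O_T)_{a_0}$ dominates $H^1(\mathcal O_{E})\neq 0$, giving the non-vanishing while $T$ retains its single isolated log canonical singularity.

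The main obstacle is exactly the construction of $\lambda$, i.e.\ of $\psi$ and of the contracting behaviour over $a_0$. An abelian variety contains no rational curves, whereas a cone is covered by them; hence $T$ can be neither the cone nor birational to it, and the singularity may appear only as a purely local (formal) germ on an otherwise non-uniruled variety finite over $A$. Reconciling this with the need for an honest positive-dimensional fibre carrying the non-rational ($\omega$-)cohomology—an elliptic curve in characteristic $0$, the divisor $Z$ in characteristic $p$—while simultaneously keeping $\lambda$ generically finite and separable and keeping the singularity of $T$ of the prescribed isolated Gorenstein log canonical type, is the heart of the matter. Once $\lambda$ is in hand, everything else is bookkeeping via \eqref{cor:Gorenstein} and the projection formula, and the output feeds into \eqref{t-1} against \eqref{GV}.
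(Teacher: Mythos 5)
There is a genuine gap: you correctly identify the strategy (transplant the local non-vanishing of \eqref{e-1} and \eqref{e-2} onto a variety generically finite over an abelian variety), but the one step that carries all the content --- the actual construction of $\lambda$ --- is not done, and you say so yourself (``the main obstacle is exactly the construction of $\lambda$\dots is the heart of the matter''). Worse, the specific route you sketch is unlikely to work. You propose a \emph{finite} separable cover $\psi\colon Y\to A$ whose germ at one point is prescribed to be the vertex of the cone, ``glued in'' by matching monodromy over $\pi_1(A\setminus\{a_0\})$. But by Zariski--Nagata purity of the branch locus, a normal finite cover of a smooth variety that is étale away from a codimension $\geq 2$ subset is étale everywhere; a finite cover of $A$ étale off a point is an isogeny of abelian varieties, hence smooth, so you cannot insert an isolated cone singularity this way, and in general one has no freedom to prescribe the formal type of a finite cover of $A$ at a single point. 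Your characteristic~$0$ endgame is also internally inconsistent: after insisting that $\psi$ be finite you then want $\lambda$ to contract an elliptic curve $E$ to $a_0$, which a finite morphism cannot do. (The curve $E\simeq E_1$ with $h^1(\sO_E)\ne 0$ must genuinely be contracted --- that is where the non-vanishing of $R^1\lambda_*\omega_T$ comes from --- so $\lambda$ must be generically finite but not finite near that point.)

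The paper resolves exactly this difficulty by two concrete constructions that avoid finite covers of $A$ altogether. In characteristic $0$: after harmless birational modifications one chooses generically finite morphisms $Y'\to\P^n$ and $A'\to\P^n$ (with $A'\to A$ birational, $(A',a')\to(\P^n,p)$ locally étale, and $(Y',v')\to(\P^n,p)$ finite near $v'$), and takes $U$ to be the normalization of the main component of $X'\times_{\P^n}A'$; then $(U,E)\to(A,a)$ is étale-locally over the base isomorphic to $(X,f^{-1}(v))\to(Y,v)\to(\P^n,p)$, so $R^1\lambda_*\omega_T\ne 0$ and the singularity type is inherited. In characteristic $p$: one cuts $Y\times A\subset\P^M$ by $n$ general very ample hypersurfaces to produce $U$ generically finite over both $Y$ and $A$, smooth away from finitely many points each locally isomorphic to $(Y,v)$, checks by a dimension count on the blow-up that the projection to $A$ is finite near those points, and then blows them up to get a smooth $T$ with $a_*R^i\nu_*\omega_T\subset R^i\lambda_*\omega_T$ nonzero. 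Without one of these (or an equivalent) mechanisms for transferring the local picture to a generically finite cover of $A$, the proposal does not constitute a proof.
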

\begin{proof} 
  First assume that $\charact k=0$ and let $f:X\to Y$ be as in \eqref{e-1}. We may
  assume that $X$ and $Y$ are projective.  Let $X'\to X$ and $Y'\to Y$ be birational
  morphisms that are isomorphisms near $f^{-1}(v)$ and $v$ respectively such that
  there is a birational morphism $f':X'\to Y'$ and a generically finite morphism
  $g:Y'\to \mathbb P ^n$. We let $v'\in Y'$ be the inverse image of $v\in Y$ and
  $p\in \mathbb P ^n$ its image.  We may assume that there is an open subset $\mathbb
  P ^n_0\subset \mathbb P ^n$ such that $g|_{Y'_0}$ is finite where
  $Y'_0=g^{-1}(\mathbb P ^n_0)$.  Note that if we let $X'_0$ be the inverse image of
  $Y'_0$ and $g'=g\circ f'$, then we have $R^ig'_*\omega
  _{X'_0}=g_*R^if'_*\omega_{X'_0}$.

  Let $A$ be an $n$-dimensional abelian variety, $A'\to A$ a birational morphism of
  smooth varieties and $A'\to \mathbb P ^n$ a generically finite morphism. We may
  assume that there are points $a'\in A'$ and $a\in A$ such that $(A',a')\to (A,a)$
  is locally an isomorphism and $(A',a')\to (\mathbb P ^n,p)$ is locally \'etale.

  Let $U$ be the normalization of the main component of $X'\times _{\mathbb P ^n}A'$
  and $h:U\to X'$ the corresponding morphism.  We let $E\subset (f'\circ
  h)^{-1}(v')\subset U$ be the component corresponding to $(v',a')\in Y'\times
  _{\mathbb P ^n}A'$. Then, the morphism $(U,E)\to (Y'\times _{\mathbb P
    ^n}A',(v',a'))\to (A,a)$ is \'etale locally (on the base) isomorphic to
  $(X,f^{-1}(v))\to (Y,v)\to (\mathbb P ^n,p)$.

  Let $\nu: T\to U$ be a birational morphism such that $\nu$ is an isomorphism over a
  neigborhood of $E\subset U$ and $T\setminus \nu ^{-1}(E)$ is smooth. Let $\lambda
  :T\to A$ be the induced morphism. It is clear from what we have observed above that
  $\lambda(E)$ is one of the components of the support of $R^1\lambda_*\omega _T\ne
  0$ and $T$ has the required singularities.

  % Assume now that the $\charact k=p>0$ and that $X$ is as in \eqref{e-2}. Let $\nu:
  % T\to U$ be a separable generically finite proper morphism from a regular variety
  % (cf. \cite{deJong96}) and $\lambda :T\to A$ the induced morphism.

  % {\tiny Since $\mathcal O_U$ is a direct summand of $R\nu _*\mathcal O _T$, we
  %   have that $\omega _U$ is a direct summand of $\nu _* \omega _T$ (in fact, by
  %   Groethendieck duality, $R\nu _* \omega _T\simeq R{{\mathcal H}om}(R\nu
  %   _*\mathcal O _T, \omega _U)$ and the claim follows by taking homologies). Since
  %   t then follows immediately that $R^i\lambda_*\omega _T\ne 0$ for some
  %   $i>0$.}\end{proof}

  Assume now that the $\charact k=p>0$ and let $f:X\to Y$ be a birational morphism of
  varieties such that $X$ is smooth and $R^if_*\omega _X \ne 0$ for some $i>0$. This
  $i$ will be fixed for the rest of the proof.  The existence of such morphisms is
  well-known (see \eqref{ex:3.5}) and an explicit example in dimension $7$ is given
  in \eqref{e-2}. Further let $A$ be an abelian variety of the same dimension as $X$
  and $Y$ and set $n=\dim A=\dim X=\dim Y$.  There are embeddings $Y\subset \mathbb P
  ^{m_1}$, $A\subset \mathbb P ^{m_2}$ and $\mathbb P ^{m_1}\times \mathbb P
  ^{m_2}\subset \mathbb P ^{M}$.  Let $H$ be a very ample divisor on $\mathbb P ^{M}$
  and $U\subset Y\times A$ the intersection of $n$ general members $H_1,\ldots
  ,H_n\in |H|$ with $Y\times A$. By choice the induced maps $h:U\to Y$ and $a:U\to A$
  are generically finite, $U$ intersects $v\times A$ transversely so that $V=U\cap
  (v\times A)$ is a finite set of reduced points and $U\setminus V$ is smooth by
  Bertini's theorem (cf. \cite[II.8.18]{Hartshorne77} and its proof).  It follows
  that any singular point $u\in U$ is a point in $V$ and $(U,u)$ is locally
  isomorphic to $(Y,v)$.  We claim that $a$ is finite in a neighborhood of $u\in U$.
  Consider any contracted curve i.e. any curve $C\subset U\cap (Y\times a(u))$. We
  must show that $u\not\in C$.  Let $\nu :T\to U$ be the blow up of $U$ along $V$ and
  $\tilde C$ the strict transform of $C$ on $T$.  We let $\mu:{\rm Bl}_V\mathbb P
  ^M\to \mathbb P ^M$, $E=\mu ^{-1}(u)\cong \mathbb P ^{M-1}$ and we denote $h_i=\mu
  ^{-1}_*H_i |_E$ the corresponding hyperplanes.  To verify the claim it suffices to
  check that $\nu ^{-1}(u)\cap \tilde C=\emptyset$.  But this is now clear as $\nu
  ^{-1}(u)\cong Z\subset \mathbb P ^{M-1}$ and the $h_i$ are general hyperplanes so
  that $Z\cap h_1\cap \ldots \cap h_n=\emptyset$ as $Z$ is $(n-1)$-dimensional.

  Let $\lambda =a\circ \nu :T\to A$ be the induced morphism.  By construction the
  support of the sheaf $R^i\nu _* \omega _T$ is $V$.
  % and any $u\in V$ is contained in  its support of $R^i\nu _* \omega _T$.
  Since $a$ is finite on a neighborhood of $u\in U$, it follows that $0\ne a_*R^i\nu
  _* \omega _T\subset R^i\lambda _* \omega _T$ and hence $R^i\lambda _* \omega _T\neq
  0$ for the same $i>0$.
 \end{proof}

\section{Main result}

\begin{prop}\label{p-2} Assume that $\lambda :X\to A$ is generically finite on to its image where $X$ is  a projective Cohen Macaulay variety and $A$ is an abelian variety. If ${\rm char }(k)=p>0$, then we assume that there is an ample line bundle $L$ on $A$ whose degree is not divisible by $p$.
  If $R^i\pi _{\widehat A*}\mathcal L=0$ for all $i<n$, then $R^i\lambda_*\omega _X=0$
  for all $i>0$.
\end{prop}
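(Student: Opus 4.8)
The plan is to translate the hypothesis into a single \emph{WIT} statement and then transport it through Grothendieck duality, the Fourier–Mukai equivalence, and Serre duality. Write $g=\dim A$, $n=\dim X$, and set $\mathcal G:=\mathbf R\lambda_*\mathcal O_X\in\mathbf D(A)$. First I would observe, via \eqref{eq-1}, that $\mathbf R\Phi(\mathcal O_X)=\mathbf R\pi_{\widehat A*}\mathcal L\simeq\mathbf R\widehat S(\mathcal G)$. Since the fibers of $\pi_{\widehat A}\colon X\times\widehat A\to\widehat A$ are all isomorphic to $X$, the complex $\mathbf R\pi_{\widehat A*}\mathcal L$ has no cohomology in degrees $>n$; together with the assumption that $R^i\pi_{\widehat A*}\mathcal L=0$ for $i<n$, this forces $\mathbf R\widehat S(\mathcal G)$ to be concentrated in degree $n$. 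Thus $\mathcal G$ is WIT-$n$, and $\widehat{\mathcal G}=\mathcal N$ where $\mathcal N:=R^n\pi_{\widehat A*}\mathcal L$ is a coherent sheaf on $\widehat A$ with $\mathbf R\widehat S(\mathcal G)\simeq\mathcal N[-n]$.

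Next I would test the target sheaves $R^i\lambda_*\omega_X$ against pullbacks of ample bundles. Fix an ample line bundle $M$ on $A$; every such $M$ is WIT-$0$, so $\widehat M:=\mathbf R\widehat S(M)=R^0\widehat S(M)$ is locally free on $\widehat A$. By the projection formula, $H^i(X,\omega_X\otimes\lambda^*M)\simeq\mathbb H^i(A,\mathbf R\lambda_*\omega_X\otimes M)$. Because $X$ is Cohen–Macaulay of dimension $n$, its dualizing complex is $\omega_X[n]$, so Grothendieck duality for $\lambda$ (using $\omega_A\simeq\mathcal O_A$) gives $\mathbf R\lambda_*\omega_X\simeq\mathbf R\mathcal{H}om_A(\mathcal G,\mathcal O_A)[g-n]$. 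Feeding this in and using that $M$ is locally free yields
\[
H^i(X,\omega_X\otimes\lambda^*M)\simeq\mathrm{Ext}^{\,i+g-n}_A(\mathcal G,M).
\]
Applying the equivalence $\mathbf R\widehat S$ and the WIT statement of the first step, $\mathrm{Ext}^{\,k}_A(\mathcal G,M)\simeq\mathrm{Hom}_{\mathbf D(\widehat A)}(\mathbf R\widehat S\mathcal G,\widehat M[k])\simeq\mathrm{Ext}^{\,k+n}_{\widehat A}(\mathcal N,\widehat M)$; with $k=i+g-n$ this reads $H^i(X,\omega_X\otimes\lambda^*M)\simeq\mathrm{Ext}^{\,i+g}_{\widehat A}(\mathcal N,\widehat M)$. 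Serre duality on $\widehat A$ then identifies this with $\mathrm{Ext}^{-i}_{\widehat A}(\widehat M,\mathcal N)^\vee$, which vanishes for every $i>0$ because $\widehat M$ is locally free.

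Finally I would upgrade this cohomology vanishing to the vanishing of the sheaves. Taking $M$ to be a sufficiently high power of the given ample bundle $L$, Serre vanishing degenerates the Leray spectral sequence for $\lambda$, so $H^i(X,\omega_X\otimes\lambda^*M)\simeq H^0(A,R^i\lambda_*\omega_X\otimes M)$; the vanishing just proved gives $H^0(A,R^i\lambda_*\omega_X\otimes M)=0$ for $i>0$, and since $R^i\lambda_*\omega_X\otimes M$ is globally generated for $M\gg0$, this forces $R^i\lambda_*\omega_X=0$, as desired.

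I expect the main obstacle to be the careful bookkeeping of the three dualities (Grothendieck duality for $\lambda$, Mukai's equivalence, and Serre duality on $\widehat A$) with the correct shifts, so that the $\omega_X$-statement collapses to a negative-$\mathrm{Ext}$ vanishing; the Cohen–Macaulay hypothesis is essential here, as it is exactly what makes $\omega_X[n]$ the dualizing complex and hence produces the clean shift $[g-n]$. The remaining delicate point is that $\widehat M$ be the expected locally free sheaf (of rank $\chi(M)$), and this is where I anticipate the characteristic-$p$ hypothesis that $\deg L$ is prime to $p$—equivalently, that the associated isogeny is separable—to intervene.
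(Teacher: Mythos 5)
Your argument is correct, but it takes a genuinely different route from the paper's, and is in fact somewhat more general. The paper's proof black-boxes Theorem A of \cite{PP11} to convert the hypothesis into the vanishing $H^i(X,\omega_X\otimes\lambda^*\widehat{L^{\vee}})=0$ for $i>0$, where $\widehat{L^{\vee}}$ is a higher-rank vector bundle on $A$ (the transform of an ample bundle on $\widehat A$); it then upgrades this, via the argument of \cite[2.9]{PP03}, to allow an extra ample twist, and finally pulls back along the isogeny $\phi_L:\widehat A\to A$ to split $\widehat{L^{\vee}}$ into copies of a line bundle --- that last step is exactly where separability ($p\nmid\deg L$) is used, to make $\omega_X$ a direct summand of $\phi_*\omega_{X'}$. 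You instead use the full strength of the hypothesis at the outset: since $\pi_{\widehat A}$ has $n$-dimensional fibers, $R^i\pi_{\widehat A*}\mathcal L$ vanishes automatically for $i>n$, so $\mathbf R\lambda_*\mathcal O_X$ is WIT-$n$; Grothendieck duality (where Cohen--Macaulayness gives $\omega_X^{\bullet}=\omega_X[n]$ and hence the clean shift $[g-n]$), Mukai's equivalence, and Serre duality on $\widehat A$ then collapse $H^i(X,\omega_X\otimes\lambda^*M)$ to ${\rm Ext}^{i+g}_{\widehat A}(\mathcal N,\widehat M)$, which vanishes for degree reasons. This buys you the vanishing for \emph{every} ample line bundle $M$ on $A$ directly, with no isogeny and no vector-bundle twist to remove, and as a consequence you never use the characteristic-$p$ degree hypothesis (nor generic finiteness of $\lambda$), so you actually prove a slightly stronger statement; the concluding Serre-vanishing/global-generation step is the same as the paper's. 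One correction to your closing remark: the place you anticipated the hypothesis $p\nmid\deg L$ entering --- local freeness of $\widehat M=R^0\widehat S(M)$ --- does not require it, since $H^{>0}(A,M\otimes P_{\alpha})=0$ for ample $M$ on an abelian variety in every characteristic, so $\widehat M$ is locally free of rank $h^0(M)$ by cohomology and base change; the hypothesis is simply superfluous in your approach.
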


\begin{proof}
  By Theorem A of \cite{PP11}, $R^i\Phi (\mathcal O _X)= R^i{\pi_{\widehat
      A}}_*\mathcal L=0$ for all $i<n$, is equivalent to
  $$
  H^i(X,\omega _X\otimes R^g\Psi (L^{\vee}))=0\qquad \forall\ i>0,
  $$
  where $L$ is sufficiently ample on $\widehat A$ and $R^g\Psi (L^{\vee })=\lambda^*\widehat
  {L^{\vee} }$ (cf. \eqref{l-c}). It is easy to see that this is in turn equivalent to
  $$
  H^i(X,\omega _X\otimes \lambda^* (\widehat{t_{\widehat a}^*L^{\vee}}))=0\qquad \forall\
  i>0,\ \forall\ \widehat a\in \widehat A,
  $$
  where $L$ is sufficiently ample on $\widehat A$. By \cite[3.1]{Mukai81}, we have
  $\widehat{t_{\widehat a}^*L^{\vee}}=\widehat{L^{\vee}}\otimes P_{-\widehat a}$ and
  hence $H^i(X,\omega _X\otimes \lambda^* (\widehat{L^{\vee}}\otimes P_{-\widehat a}))=0$.
  Thus, by cohomology and base change, we have that
  $$
  \mathbf R \widehat S (\mathbf R \lambda_* \omega _X\otimes \widehat {L^{\vee}
  })=^{\eqref{eq-1}}\mathbf R \Phi (\omega _X\otimes \lambda^* \widehat {L^{\vee} })= R^0
  \Phi (\omega _X\otimes \lambda^* \widehat {L^{\vee} }).
  $$ 
  In particular $\mathbf R \lambda_* \omega _X\otimes \widehat {L^{\vee} }$ is WIT-$0$.
  \begin{claim}
    For any ample line bundle $M$ on $A$, we have that 
    $$
    H^i(X,\omega _X\otimes \lambda^*(\widehat {L^{\vee} }\otimes M\otimes P_{-\widehat{a}}
    ))=0\qquad \forall\ i>0,\ \forall\ \widehat a\in \widehat A.
    $$
  \end{claim}

  \begin{proof}
    We follow the argument in \cite[2.9]{PP03}.  For any $P=P_{-\widehat a} $, we
    have
    \begin{multline*}
      H^i(X, \omega _X\otimes \lambda^*(\widehat {L^{\vee} }\otimes M\otimes P))=
      R^i\Gamma (X, \omega _X\otimes \lambda^*(\widehat {L^{\vee} }\otimes M\otimes
      P))=^{\rm P.F.}
      \\
      R^i\Gamma (A, \mathbf R \lambda_*\omega _X\otimes \widehat {L^{\vee} }\otimes
      M\otimes P)= {\rm Ext}_{D(A)}^i((M\otimes P)^\vee, \mathbf R \lambda_*\omega
      _X\otimes \widehat {L^{\vee} })=^{\eqref{eq-0}}
      \\
      {\rm Ext}_{D(\widehat A)}^{i+g}(R^g\widehat S({(M\otimes P)^\vee}), R^0\Phi
      (\omega _X\otimes \lambda^*\widehat {L^{\vee} }))=
      \\
      H^{i+g}(\widehat A, R^0\Phi(\omega _X\otimes \lambda^*\widehat {L^{\vee} })\otimes
      R^g\widehat S({(M\otimes P)^\vee})^\vee )=0\qquad i>0.
    \end{multline*}
    (The third equality follows as $M\otimes P$ is free, the fifth follows since
    $R^g\widehat S({M\otimes P)^\vee}$ is free and the last one since $i+g>g=\dim \widehat A$.)
\end{proof}

Let $\phi _L:\widehat A\to A$ be the isogeny induced by $\phi _L(\widehat
x)=t_{\widehat x}^*L\otimes L^\vee$, then $\phi _L ^* \widehat {L^{\vee}}=L^ {\oplus h^0(L)}$. 
We may assume that the characteristic does not divide the degree of $L$ so that $\phi _L$ is separable.
Let $X'=X\times _A \widehat A$, $\phi :X'\to X$ and $\lambda ':X'\to \widehat
A$ the induced morphisms. Note that ${\phi }_*\mathcal O_{X'}=\lambda^*({\phi _L}_*\mathcal
O_{\widehat A})=\lambda^*(\oplus P_{\alpha _i})$ where the $\alpha _i$ are the elements in
$K\subset \widehat A$, the kernel of the induced homomorphism $\phi _L:\widehat A
\to A$.  By the above equation and flat base change
$$
H^i(X',\omega _{X'}\otimes {\lambda'}^*\phi _L^*(\widehat {L^{\vee} }\otimes M))=\bigoplus
_{\alpha \in K} H^i(X,\omega _X \otimes\lambda^*(\widehat {L^{\vee} }\otimes M\otimes
P_{\alpha }))=0
$$ 
for all $i>0.$ But then $H^i(X',\omega _{X'}\otimes {\lambda'}^*(L\otimes \phi _L^* M))=0$
for all $i>0$.  Note that if $M$ is sufficiently ample on $A$ then so is $L\otimes
\phi _L^* M$ on $\widehat A$.  It follows by an easy (and standard) spectral sequence
argument that $R^i\lambda'_* \omega _{X'}=0$ for $i>0$. Since $\omega _X$ is a summand of
$\phi _* \omega _{X'}=\mathbf R\phi _*\omega _{X'}$, and $\mathbf R\lambda_* \mathbf R\phi
_*\omega _{X'}=\mathbf R{\phi _L}_* \mathbf R\lambda'_*\omega _{X'}$, it follows that
$R^i\lambda_*\omega _X$ is a summand of $R^i\lambda_*\phi _* \omega _{X'}={\phi _L
}_*R^i\lambda'_*\omega _{X'}$ and hence $R^i\lambda_*\omega _X=0$ for all $i>0$.
\end{proof}
\begin{proof}[Proof of \eqref{t-1}] Immediate from \eqref{p-1} and \eqref{p-2}.
\end{proof}
%\section{Questions / To do / or maybe not.} \begin{enumerate}\item I expect that $R^ip_{\widehat A}\mathcal L=0$ for all $i<n$, holds if we have the  appropriate liftings to Witt vectors (but haven't thought about it yet).\item Write down the connection to generic vanishing and show that this doesn't hold in char $>0$.\item Find example where $V^i(\omega _X)=\{\widehat a\in \widehat A| h^0(\omega _X \otimes \lambda^*P_{\widehat a}\}$ is not an abelian subvariety; $V^j\not\supset V^k$ for some $j>k$. \item Compare to results of Pink-Roessler; Esnault-Ogus. \end{enumerate}

\enddocument
\end